\newtheorem{theorem}{Theorem}[section]
\newtheorem{proposition}[theorem]{Proposition}
\newtheorem{corollary}[theorem]{Corollary}
\newtheorem{lemma}[theorem]{Lemma}
\newtheorem{remark}[theorem]{Remark}
\numberwithin{equation}{section}
\newcommand\restr[2]{\ensuremath{\left.#1\right|_{#2}}}
\begin{document}  
\title[Generalized manifolds, normal invariants, and $\mathbb{L}$-homology]{ 
Generalized manifolds, normal invariants, and $\mathbb{L}$-homology} 
\author[F. Hegenbarth and D. Repov\v{s}]{ 
Friedrich Hegenbarth and Du\v{s}an Repov\v{s}}
\address{Dipartimento di Matematica "Federigo Enriques", 
Universit\` a degli studi di Milano, 
20133 Milano, Italy}
\email{friedrich.hegenbarth@unimi.it} 
\address{Faculty of Education and Faculty of Mathematics and Physics, University of Ljubljana \&
Institute of Mathematics, Physics and Mechanics, 
1000 Ljubljana, Slovenia}
\email{dusan.repovs@guest.arnes.si}
\begin{abstract}  
Let  $X^{n}$ be an arbitrary
 oriented closed generalized $n$-ma\-ni\-fold,
  $n\ge 5$.
 In our recent paper (Proc. Edinb. Math. Soc. (2) 63 (2020), no. 2, 597-607) we have constructed
 a map 
 $t:\mathcal{N}(X^{n}) \to H^{st}_{n} ( X^{n}; \mathbb{L}^+)$
 which extends the normal invariant map for the case when 
 $X^{n}$
 is a topological $n$-manifold.
 Here, 
 $\mathcal{N}(X^{n})$ denotes the set of all normal bordism classes of degree one normal maps 
 $(f,b): M^{n} \to X^{n},$
 and
 $H^{st}_{*} ( X^{n}; \mathbb{E})$
 denotes the Steenrod homology of the spectrum $\mathbb{E}$. 
 An important nontrivial question arose whether
 the
  map
 $t$ is bijective (note that this holds in the case that $X^{n}$ is a topological $n$-manifold).
 It is the purpose of this paper to prove that the answer to this question is affirmative. 
\end{abstract} 
\subjclass[2020]{Primary: 55N07, 55R20, 57P10, 57R67;
Secondary: 18F15, 55M05, 55N20, 57P05, 57P99, 57R65}
\keywords{Generalized manifold, Steenrod $\mathbb{L}-$homology, Poincar\'{e} duality complex, normal invariant of degree one map,
periodic surgery spectrum $\mathbb{L}$, fundamental complex, Spivak fibration, Pontryagin-Thom construction, Spanier-Whitehead duality,
absolute neighborhood retract}  
\dedicatory{Dedicated to the memory of Professor Erik Kj{\ae}r Pedersen (1946-2020)} 
\maketitle
\section{Introduction}\label{s1} 
Throughout the paper, $n$ will denote an integer $\ge 5$.  
A {\it generalized ma\-ni\-fold  $X^{n}$  (without boundary)}
of dimension $n\in \mathbb{N}$  is a 
{\it Euclidean neighborhood retract (ENR)}
 (i.e. $X^{n}$ is an $n$-dimensional locally compact separable metri\-za\-ble absolute neighborhood retract (ANR)), 
 satisfying the {\sl local Poincar\'{e} duality}
(i.e. the local homology of $X^{n}$ is like that of $\mathbb{R}^n$).

In this paper we shall consider only {\it oriented connected  compact} gene\-ra\-lized manifolds.
Clearly, every oriented {\it closed} (i.e. connected, compact and without boundary) topological manifold is such a space
(cf. Cavicchioli, Hegen\-barth and Repov\v{s}~\cite{CHR16}).

 For every generalized $n$-manifold $X^{n},$ 
 there exists an embedding 
 $\varphi: X^{n}\hookrightarrow \mathbb{R}^{m}$
  into $\mathbb{R}^m$, 
 for a sufficiently large $m\ge n \in \mathbb{N}$,
 so that the boundary $\partial N^{m} \subset \mathbb{R}^{m}$ of a neighbourhood 
 $N^{m} \subset \mathbb{R}^m$ of $\varphi(X^{n})$ in $\mathbb{R}^m$
 is homotopy equivalent to a spherical fibration 
 $\nu_{X^{n}}$, 
 called the {\it Spivak fibration}, 
 with fiber homotopy equivalent to $S^{m-n-1}$
 (cf.  Browder~\cite{Br72}).
 We shall consider only the oriented case and we
 shall denote
 also  its classifying map by
 $\nu_{X^{n}}:X^{n}\to BSG.$
 
 A systematic construction of generalized manifolds was given by
 Bryant, Ferry, Mio and Weinberger~\cite {BFMW96}
 (for a comprehensive treatment see Cavicchioli, Hegenbarth and Repov\v{s}~\cite{CHR16}
 and
 Hegenbarth and Repov\v{s}~\cite{HeRe06}, 
 and the references therein).
 It was proved by Ferry and Pedersen~\cite{FePe95} that there is a
 canonical lift
 $\xi_{0}:X^{n} \to BSTOP$ of $\nu_{X^{n}}$, i.e.
 the composition
 $X^{n} \xrightarrow[]{\xi_{0}} BSTOP \xrightarrow[]{\mathcal{J}} BSG$
 is homotopic to  $\nu_{X^{n}}$.
 It gives rise to the {\it canonical surgery problem,}
   denoted by
 $(f_{0}, b_{0})$, via the Pontryagin-Thom construction.
 
 Here, 
 $f_{0}:M^{n}_{0} \to X^{n}$ 
 is a degree one map, where 
 $M^{n}_{0}$
 is a closed topological $n$-manifold
 and 
 $b_{0}:\nu_{M^{n}_{0}} \to \xi_{0}$
 is a bundle map, covering the map $f_{0}$ (by slightly abusing the notation, we shall 
  denote
  by  $\nu_{M_{0}^{n}}$
  also
   the
 stable normal  $\mathbb{R}^{m-n}$-bundle of 
 an embedding
 $M^{n}_{0} \hookrightarrow  \mathbb{R}^{m}$, not just its associated spherical fibration).
 The canonical surgery problem $(f_{0}, b_{0})$ is unique up to normal bordism.
 
 Let us denote the set of all normal bordism classes of normal degree one maps $(f,b)$ by $\mathcal{N}(X^{n})$,
 where 
 $f:M^{n} \to X^{n}$
 is a map of degree one,
 $b:\nu_{M^{n}} \to \xi$
 is a bundle map covering $f$,
 and
 $\xi:X^{n} \to BSTOP$ is a $TOP$-reduction of
$\nu_{X^{n}}$ 
(i.e. 
$\mathcal{J} \circ \xi$
is homotopic to 
$\nu_{X^{n}})$.

In the case when $X^{n}$ is a closed $n$-manifold, one associates to $(f,b)$
and element in
$H_{n}(X^{n}; \mathbb{L}^{+}),$
where
$\mathbb{L}^{+}=\mathbb{L}<1>$ is the  (semi-simplicial) {\it connected surgery spectrum}
 (cf. 
 K\" uhl, Macko and Mole ~\cite{KMM13},
 Nicas~\cite{Ni82},
 and
 Ranicki~\cite[Chapter 18]{Ra92}).
 
  In the case when $X^{n}$ is a
 topological
  $n$-manifold,
 this element in 
 $H_{n}(X^{n}; \mathbb{L}^{+})$
 is obtained by decomposing 
 $(f,b)$
 into adic pieces, using a transversality structure on the manifold
 $X^{n}$ (cf. Ranicki~\cite[Chapter 16]{Ra92}).
 This defines a map
 $t:\mathcal{N}(X^{n}) \to H_{n}(X^{n}; \mathbb{L}^{+})$
 which is bijective.
 The image of 
 $(f,b)$
 is called the {\it normal invariant} of the normal degree one map
 $(f,b)$.
 
 This construction does not carry over to generalized manifolds
 $X^{n}$.
 If 
 $X^{n}$
 is not homotopy equivalent to a topological $n$-manifold, there is no transversality structure on
 $X^{n}$. Moreover, what does 
 $\mathbb{L}^{+}$-homology 
 mean in the class of  compact ENR's?
 In our recent paper  
  we have proved the following result.  
 
 \begin{theorem}({\hbox{\rm{Hegenbarth-Repov\v{s}}}~\cite[Theorem~5.1]{HeRe20})}\label{A}
 Let $X^{n}$
 be an oriented closed ge\-ne\-ra\-li\-zed $n$-manifold, $n\ge 5$. Then one can construct a map 
 \[
 t:\mathcal{N}(X^{n}) \to H^{st}_{n} ( X^{n}; \mathbb{L}^+)
 \]
 which extends the normal invariant map in the case when 
 $X^{n}$
 is a 
 topological
 $n$-manifold.
 \end{theorem}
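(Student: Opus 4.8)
The idea is to realize $t$ as a composite
\[
\mathcal{N}(X^{n})\ \xrightarrow{\ c\ }\ H^{0}(X^{n};\mathbb{L}^{+})\ \xrightarrow{\ PD_{X^{n}}\ }\ H^{st}_{n}(X^{n};\mathbb{L}^{+}),
\]
where $c$ is a Sullivan--Wall classifying map that is available for every finite Poincar\'e complex admitting a $TOP$-reduction, and $PD_{X^{n}}$ is a Poincar\'e duality isomorphism manufactured by Spanier--Whitehead duality out of the Spivak fibration and its canonical $TOP$-reduction; and then to verify that this composite is Ranicki's normal invariant map when $X^{n}$ is a topological manifold. Fix, as in the Introduction, an embedding $\varphi\colon X^{n}\hookrightarrow\mathbb{R}^{m}$ with $m$ large and a closed neighbourhood $N^{m}$, so that $N^{m}$ is a compact topological $m$-manifold with boundary, $X^{n}\hookrightarrow N^{m}$ is a homotopy equivalence, $\partial N^{m}$ realizes the total space of $\nu_{X^{n}}$, and $N^{m}/\partial N^{m}\simeq\mathrm{Th}(\nu_{X^{n}})$. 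Since a compact ENR has the homotopy type of a finite complex, $H^{st}_{*}(X^{n};\mathbb{E})$ is the ordinary $\mathbb{E}$-homology of that complex for every spectrum $\mathbb{E}$; the Steenrod formulation is nevertheless the one dual to the stable cohomotopy of the compactum $X^{n}$, hence the one respected by the duality below.

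First I would construct $c$. By Ferry--Pedersen~\cite{FePe95} the Spivak fibration $\nu_{X^{n}}$ carries the canonical $TOP$-reduction $\xi_{0}$, so $\mathcal{N}(X^{n})\neq\emptyset$ and $(f_{0},b_{0})$ serves as a base point. Given a degree one normal map $(f,b)\colon M^{n}\to X^{n}$ with $b\colon\nu_{M^{n}}\to\xi$, the Pontryagin--Thom construction together with transversality to the zero section of the $TOP$-bundle $\xi$ (which uses only the bundle, not a manifold structure on $X^{n}$) is the input to the standard surgery-theoretic argument identifying $\mathcal{N}(X^{n})$ with $[X^{n},G/TOP]$; write $c\colon\mathcal{N}(X^{n})\xrightarrow{\ \cong\ }[X^{n},G/TOP]$, normalised by $c(f_{0},b_{0})=0$. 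Using $G/TOP\simeq\Omega^{\infty}\mathbb{L}^{+}$ (Sullivan; Ranicki~\cite[Ch.~18]{Ra92}) and the connectivity of $\mathbb{L}^{+}=\mathbb{L}\langle 1\rangle$, one has a natural isomorphism $[X^{n},G/TOP]\cong H^{0}(X^{n};\mathbb{L}^{+})$.

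Next I would build $PD_{X^{n}}$. By Alexander duality for the compactum $\varphi(X^{n})\subset S^{m}$, the suspension spectrum $\Sigma^{\infty}X^{n}_{+}$ is Spanier--Whitehead dual to $\Sigma^{-m}\Sigma^{\infty}\mathrm{Th}(\nu_{X^{n}})$. Identifying $\mathrm{Th}(\nu_{X^{n}})\simeq\mathrm{Th}(\xi_{0})$, the honest bundle $\xi_{0}$ carries the canonical symmetric $L$-theory Thom class; since $\mathbb{L}^{+}$ is a module spectrum over the (connective) symmetric $L$-theory ring spectrum, this yields a Thom isomorphism $\widetilde{H}^{*}(\mathrm{Th}(\xi_{0});\mathbb{L}^{+})\cong H^{*-(m-n)}(X^{n};\mathbb{L}^{+})$, and likewise in homology. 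Composing the Thom isomorphism with $S$-duality produces an isomorphism
\[
PD_{X^{n}}\colon\ H^{0}(X^{n};\mathbb{L}^{+})\ \xrightarrow{\ \cong\ }\ H^{st}_{n}(X^{n};\mathbb{L}^{+}),
\]
which is the cap product with the $L$-theory fundamental class $[X^{n}]\in H^{st}_{n}(X^{n};\mathbb{L}^{\bullet})$ that is $S$-dual to the Thom class of $\xi_{0}$; it is an isomorphism precisely because $\xi_{0}$ is a genuine bundle and $X^{n}$ has the local homology of $\mathbb{R}^{n}$. I would then set
\[
t\ :=\ PD_{X^{n}}\circ c\colon\ \mathcal{N}(X^{n})\ \longrightarrow\ H^{st}_{n}(X^{n};\mathbb{L}^{+}).
\]

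It then remains to check naturality --- that $t$ does not depend on $\varphi$ or on $m$, two such neighbourhoods being related by stabilization and an $h$-cobordism under which everything above is natural --- and the extension property. When $X^{n}$ is a topological $n$-manifold, $\xi_{0}$ is its genuine normal bundle, $[X^{n}]$ is the canonical symmetric $L$-theory fundamental class, and $PD_{X^{n}}$ is Ranicki's $\mathbb{L}$-theory Poincar\'e duality; since Ranicki's normal invariant map is itself the Poincar\'e dual of the Sullivan--Wall classifying map --- this is the identification in~\cite[Ch.~16 and 18]{Ra92} of the quadratic-signature normal invariant with the $G/TOP$-normal invariant --- the map $t$ coincides with it, hence extends the classical normal invariant map. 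I expect the main obstacle to be the isomorphism statement for $PD_{X^{n}}$: one must run the Spanier--Whitehead/symmetric-$L$-theory-Thom-isomorphism argument directly in the Steenrod $\mathbb{L}^{+}$-(co)homology of the bare compactum $X^{n}$, and show that it is exactly the local $\mathbb{R}^{n}$-homology of the ENR $X^{n}$ that makes the resulting cap product bijective. A secondary delicate point is the manifold-case comparison, which requires reconciling this duality-theoretic $t$ with Ranicki's simplicial transversality construction.
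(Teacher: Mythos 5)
Your construction is essentially correct, but it is a genuinely different route from the paper's. The map $t$ of Theorem~\ref{A} is constructed in \cite{HeRe20} (and recalled in Section~\ref{s2}) \emph{geometrically}: one resolves $X^{n}$ by an inverse system of nerves, forms the open fundamental complex $F_{0}$, regards the mapping cylinders of $(f,b)\times Id_{\mathbb{R}_{+}}$ and $(f_{0},b_{0})\times Id_{\mathbb{R}_{+}}$ glued along $F_{0}$ as a normal space with manifold boundary, decomposes it into adic pieces to get a class in $H^{lf}_{n+2}(F_{0};\Omega^{NSTOP})$, and then applies $sign^{\mathbb{L}}$ and the boundary map of the triple $(F,X^{n},\{c_{0}\})$. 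What you propose instead --- $\mathcal{N}(X^{n})\cong[X^{n},G/TOP]\cong H^{0}(X^{n};\mathbb{L}^{+})$ followed by the $\mathbb{L}^{\bullet}$-Thom isomorphism for $\xi_{0}$ and the Kahn--Kaminker--Schochet $S$-duality \cite{KKS77} --- is precisely the bottom row of diagrams~\ref{d1} and~\ref{d15}; the whole point of the present paper (Theorem~\ref{B}, proved in Sections~\ref{s3}--\ref{s4}) is that the geometric $t$ of \cite{HeRe20} \emph{equals} this duality composite. So your definition buys bijectivity for free and reduces the manifold-case comparison to Ranicki's identification of the normal invariant with $\cdot\,\cap[M]_{\mathbb{L}^{\bullet}}$ \cite[Proposition 18.3]{Ra92}, exactly as the paper uses it; but it renders Theorem~\ref{B} vacuous for \emph{your} $t$ and discards the adic/normal-space description that carries the geometric content (e.g.\ the link to controlled surgery in Section~\ref{s5}). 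The nontrivial work is not eliminated, only relocated: to justify the attribution to \cite[Theorem 5.1]{HeRe20} one must still show your duality-theoretic map agrees with the map constructed there, which is the substance of Sections~\ref{s3} and~\ref{s4}. Two smaller points you correctly flag but should not understate: topological transversality of $\alpha\colon S^{m}\to T(\xi)$ to the zero section $X^{n}\subset T(\xi)$ when $X^{n}$ is not a manifold (the paper relies on the same step in Lemma~\ref{l1}), and the fact that the well-definedness and bijectivity of $\mathcal{N}(X^{n})\to[X^{n},G/TOP]$ for a Poincar\'e complex with chosen reduction is itself Corollary~\ref{c1}-type input rather than automatic.
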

 
 Here, 
 $H^{st}_{*} ( X^{n}; \mathbb{E})$
 denotes the {\it Steenrod homology} of the spectrum $\mathbb{E}$.
 We refer to 
 Ferry~\cite{Fe95},
 Kahn, Kaminker and Schochet~\cite{KKS77},
 and Milnor~\cite{Mi95}
 for the construction and properties. 
 
 As it was already pointed out above,  the map 
 $t:\mathcal{N}(X^{n}) \to H^{st}_{n} ( X^{n}; \mathbb{L}^+)$ in Theorem~\ref{A}
 is bijective for topological $n$-manifolds  $X^{n}$. Therefore it is very natural to ask if perhaps bijectivity of $t$ also holds for generalized $n$-manifolds
 $X^{n}$?
 The main goal of the present paper is to show that the answer to
 this question is affirmative.
 
 \begin{theorem}\label{B}
 Let $X^{n}$
 be an oriented closed generalized $n$-manifold, $n\ge 5$. Then the map 
 $
 t:\mathcal{N}(X^{n}) \to H^{st}_{n} ( X^{n}; \mathbb{L}^+)
 $
in Theorem~\ref{A} is also a bijection.
 \end{theorem}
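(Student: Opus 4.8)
The strategy is to factor $t$, up to natural identifications, as a composite of two bijections through the homotopy set $[X^{n},G/TOP]$. \emph{Step 1: $\mathcal N(X^{n})\cong[X^{n},G/TOP]$.} By Browder~\cite{Br72} the Spivak fibration $\nu_{X^{n}}$ exists, and by Ferry--Pedersen~\cite{FePe95} it admits the canonical $TOP$-reduction $\xi_{0}$; hence the set of $TOP$-reductions of $\nu_{X^{n}}$ is non-empty, and is therefore a torsor over $[X^{n},\mathrm{fib}(\mathcal J)]=[X^{n},G/TOP]$. Since $X^{n}$ is a compact ENR it embeds in some $\mathbb R^{m}$ with a regular neighbourhood $N^{m}$ whose boundary realises $\nu_{X^{n}}$, so the Pontryagin--Thom construction turns a reduction $\xi$ into a degree-one normal map $(f_{\xi},b_{\xi})\colon M^{n}\to X^{n}$, with $\xi_{0}$ mapping to the canonical surgery problem $(f_{0},b_{0})$; I would check that this induces a bijection of that torsor onto $\mathcal N(X^{n})$. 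This is exactly the Sullivan--Wall identification of normal invariants, and its proof (Wall, \emph{Surgery on Compact Manifolds}, Ch.~10; Browder) uses only the Spivak fibration together with Spanier--Whitehead duality and the Pontryagin--Thom construction, never a manifold, simplicial, or transversality structure on the target, so it applies verbatim to the ENR $X^{n}$.

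\emph{Step 2: $[X^{n},G/TOP]\cong H^{st}_{n}(X^{n};\mathbb L^{+})$.} As an infinite loop space one has $G/TOP\simeq\Omega^{\infty}\mathbb L^{+}$ with $\mathbb L^{+}=\mathbb L\langle 1\rangle$ (Sullivan; Kirby--Siebenmann; Nicas~\cite{Ni82}; Ranicki~\cite[Ch.~18]{Ra92}), so $[X^{n},G/TOP]=(\mathbb L^{+})^{0}(X^{n})$, the $0$-th $\mathbb L^{+}$-cohomology. Being a compact ENR, $X^{n}$ is finitely dominated, so $\Sigma^{\infty}X^{n}_{+}$ is a dualisable spectrum and Spanier--Whitehead duality applies; by West's theorem a compact ANR has the homotopy type of a finite $CW$ complex, so $H^{st}_{*}(X^{n};\mathbb L^{+})$ coincides with ordinary $\mathbb L^{+}$-homology and the usual duality identities hold. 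Atiyah duality for the Poincar\'e space $X^{n}$ identifies the $S$-dual of $X^{n}_{+}$ with the Thom spectrum of $-\xi_{0}$, and the canonical $\mathbb L^{\bullet}$-orientation of the $TOP$-bundle $\xi_{0}$ --- equivalently, the canonical $\mathbb L^{\bullet}$-homology fundamental class $[X^{n}]_{\mathbb L}$ of the generalized manifold $X^{n}$ --- furnishes, through cap product with $[X^{n}]_{\mathbb L}$ (concretely: Atiyah duality followed by the $\mathbb L^{\bullet}$-Thom isomorphism, using that $\mathbb L^{+}$ is a module over the ring spectrum $\mathbb L^{\bullet}$), the Poincar\'e duality isomorphism $(\mathbb L^{+})^{0}(X^{n})\xrightarrow{\ \cong\ }H^{st}_{n}(X^{n};\mathbb L^{+})$. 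This is the same argument that yields $[M,G/TOP]\cong H_{n}(M;\mathbb L^{+})$ for topological manifolds \cite[Ch.~18]{Ra92}; its only geometric inputs are Poincar\'e duality for $X^{n}$ (local, hence global) and a $TOP$-reduction of $\nu_{X^{n}}$, both available here.

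\emph{Step 3: compatibility and conclusion.} Finally I would unwind the construction of $t$ in \cite{HeRe20} and verify that, under the identifications of Steps~1 and~2, $t$ \emph{is} the composite $\mathcal N(X^{n})\xrightarrow{\ \cong\ }[X^{n},G/TOP]\xrightarrow{\ \cong\ }H^{st}_{n}(X^{n};\mathbb L^{+})$; since $t$ was built precisely to extend the classical normal invariant map (Theorem~\ref{A}), which is this composite when $X^{n}$ is a manifold, and since all the ingredients are natural in $X^{n}$, the two maps agree, and $t$ is then a bijection. I expect the principal difficulty to lie in Step~2 --- carrying out the Spanier--Whitehead/Atiyah duality and the $\mathbb L^{\bullet}$-orientation cleanly within the category of compact ANRs and Steenrod homology, and tracking the connectivity and symmetric-versus-quadratic bookkeeping ($\mathbb L\langle 1\rangle$ versus $\mathbb L$) --- together with the verification in Step~3 that the torsor structure on $\mathcal N(X^{n})$ is the one induced by the additive group structure of $H^{st}_{n}(X^{n};\mathbb L^{+})$ at the level of spectra.
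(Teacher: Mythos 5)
Your Steps 1 and 2 are essentially the identifications the paper also uses (Corollary~\ref{c1} and diagram~\ref{d14}): $\mathcal N(X^{n})\cong[X^{n},G/TOP]\cong H^{0}(X^{n};\mathbb L^{+})$, followed by the Thom isomorphism for the canonical $\mathbb L^{\bullet}$-orientation of $\xi_{0}$ and the $S$-duality $\widetilde H^{m-n}(T(\xi_{0});\mathbb L^{+})\cong H^{st}_{n}(X^{n};\mathbb L^{+})$. (The paper obtains the latter from Kahn--Kaminker--Schochet's Theorem~B for compact metric subspaces of $S^{m}$ rather than from West's theorem; this choice is not cosmetic, because that form of the duality is natural with respect to the embedding data and the normal map $(f_{0},b_{0})$, which is what the rest of the argument needs.)

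The gap is Step 3. The map $t$ of Theorem~\ref{A} is not defined abstractly as ``the'' normal invariant; it is defined by a specific construction --- gluing the mapping cylinders of $(\Phi,B)$ and $(\Phi_{0},B_{0})$ over the fundamental complex $F_{0}$, decomposing the resulting normal space into adic pieces in $H^{lf}_{n+2}(F_{0};\Omega^{NSTOP})$, and applying $\partial$ and $sign^{\mathbb L}$. The statement that this equals your duality composite is exactly the theorem to be proved, and ``all the ingredients are natural in $X^{n}$'' does not establish it: naturality in $X^{n}$ gives nothing when $X^{n}$ is fixed and is not a manifold, and the fact that $t$ extends the classical normal invariant map constrains $t$ only on manifold targets. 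The device the paper uses to close this gap is a factorization of $t$ through the canonical surgery problem: given $(f,b)$, pull $\xi$ back along $f_{0}$ to obtain a surgery problem $(f',b')$ over the genuine manifold $M^{n}_{0}$, prove that $(f_{0},b_{0})\circ(f',b')$ is normally bordant to $(f,b)$ (Lemma~\ref{l1}), show by an explicit mapping-cylinder and bordism computation over $F_{0}$ that $t(f,b)=(f_{0})_{*}\bigl(t_{0}(f',b')\bigr)$, where $t_{0}$ is the classical normal invariant on $M^{n}_{0}$ (Lemma~\ref{l2}, Corollary~\ref{c2}), and then use naturality of the Thom class and of the $S$-duality under $(T(b_{0}))^{*}$ and $(f_{0})_{*}$ (diagrams~\ref{d12}--\ref{d14}) to identify $(f_{0})_{*}\circ t_{0}$ precomposed with $(f,b)\mapsto(f',b')$ with the duality isomorphism of your Step~2. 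Without some such mechanism for actually computing $t$ on a non-manifold target, your Step~3 is an assertion rather than a proof.
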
 
 
 We outline the plan how we shall prove Theorem~\ref{B}.
 In Section~\ref{s2} we shall  recall the construction of the map 
 $t:\mathcal{N}(X^{n}) \to H^{st}_{n} ( X^{n}; \mathbb{L}^+)$
 from Hegenbarth and Repov\v{s}~\cite{HeRe20}.
In Section~\ref{s3} we shall prove that the map 
$t:\mathcal{N}(X^{n}) \to H^{st}_{n} ( X^{n}; \mathbb{L}^+)$ 
is the composition of maps in the following commutative diagram
\begin{equation}\label{d1}
\begin{tikzpicture}[baseline=-0.8cm, node distance=2.5cm, auto, ]
  \node (Xl) {$\mathcal{N}(M^{n}_{0})$};
  \node (Fl) [right of=Xl] {$H_{n} (M^{n}_{0}; \mathbb{L}^{+})$};
  \node (Xl1) [below of=Xl, yshift=1cm] {$\mathcal{N}(X^{n})$};
  \node (Fl1) [below of=Fl, yshift=1cm] {$H^{st}_{*} ( X^{n}; \mathbb{L}^{+})$};
  \draw[->, font=\small] (Xl) to node [midway, above]{$t_{0}$} (Fl);
  \draw[->, font=\small]  (Xl1) to node {} (Xl);
  \draw[->, font=\small] (Fl) to node {$(f_{0})_{*}$} (Fl1);
  \draw[->, font=\small] (Xl1) to node [midway, above]{$t$} (Fl1);
\end{tikzpicture}
\end{equation} 

 There are canonical identifications of
 $\mathcal{N}(M^{n}_{0})$
 with
 $H^{0} (M^{n}_{0}; \mathbb{L}^{+})$
 and
 $\mathcal{N}(X^{n})$
 with
 $H^{0}(X^{n}; \mathbb{L}^{+})$
 such that
 $\mathcal{N}(X^{n})\to \mathcal{N}(M^{n}_{0})$
 corresponds to
  $$
 (f_{0})^{*}: H^{0}(X^{n}; \mathbb{L}^{+}) \to H^{0} (M^{n}_{0}; \mathbb{L}^{+}).
 $$
 A precise definition will be given at the beginning of Section~\ref{s3}.
 
 Here, 
 $(f_{0}, b_{0})$
 is the canonical surgery problem mentioned above.  
 It is well-known that the composed map 
 \[
 H^{0} (M^{n}_{0}; \mathbb{L}^{+}) 
 \xrightarrow[]{\cong} 
  \mathcal{N}(M^{n}_{0})
   \xrightarrow{t_{0}} 
    H_{n} (M^{n}_{0}; \mathbb{L}^{+})
    \] 
 is equal to the following composition of isomorphisms
 \[
 H^{0} (M^{n}_{0}; \mathbb{L}^{+})
 \xrightarrow[]{\cong}  
 \widetilde{H}^{m-n}(T(\nu_{M^{n}_{0}}); \mathbb{L}^{+})
  \xrightarrow[]{SD}  
 H_{n} (M^{n}_{0}; \mathbb{L}^{+}),
 \]
where $T(\nu_{M^{n}_{0}})$
denotes the {\it Thom space} of the normal bundle of an embedding 
$M^{n}_{0} \hookrightarrow  \mathbb{R}^{m}$
and the first map is the {\it Thom isomorphism}.
The second map $SD$ denotes the $S$-{\it duality} (i.e. the {\it Spanier-Whitehead duality})
isomorphism (cf.
K\" uhl, Macko, and Mole~\cite[Chapter 14,  p.259]{KMM13} 
and 
Ranicki~\cite[Chapter 17, p.193]{Ra92}).

The same isomorphisms hold for $X^{n}$
(cf. Ranicki~\cite[Proposition 16.1 (v), p.175]{Ra92},
\[
H^{0}(X^{n}; \mathbb{L}^{+}) \xrightarrow[]{\cong}  \widetilde{H}^{m-n}(T(\nu_{X^{n}}); \mathbb{L}^{+}),
\]
where we assume 
$X^{n} \hookrightarrow  \mathbb{R}^{m},$
and the existence of the isomorphism
\[
\widetilde{H}^{m-n}(T(\nu_{X^{n}}); \mathbb{L}^{+})
 \xrightarrow[\cong]{SD}  
 H^{st}_{n} (X^{n}; \mathbb{L}^{+})
 \] 
follows from  Kahn, Kaminker and Schochet~\cite[Theorem B, p.205]{KKS77}.

Finally, in Section~\ref{s4} we shall show that since 
 $(f_{0}, b_{0})$
 is a normal degree one map, the following diagram commutes (cf. diagram~\ref{d12} in Section~\ref{s4})    
 \begin{equation}\label{d2}
\begin{tikzpicture}[baseline=-0.8cm, node distance=3.5cm, auto] 
  \node (A) {$H^{0} (M^{n}_{0}; \mathbb{L}^{+})$};
  \node (B) [right of=A] {$\widetilde{H}^{m-n}(T(\nu_{M^{n}_{0}}); \mathbb{L}^{+})$};
   \node (C) [right of=B] {$H_{n} (M^{n}_{0}; \mathbb{L}^{+}) $};  
  \node (E) [below of=A, yshift=2cm] {$H^{0} (X^{n}; \mathbb{L}^{+})$};
  \node (F) [below of=B, yshift=2cm] {$\widetilde{H}^{m-n}(T(\nu_{X^{n}}); \mathbb{L}^{+})$};
   \node (G) [below of=C, yshift=2cm] {$H^{st}_{n} (X^{n}; \mathbb{L}^{+}) $};  
  \draw[->, font=\small] (A) to node {} (B);
  \draw[->, font=\small] (B) to node {$SD$} (C);  
  \draw[->, font=\small] (E) to node {$(f_{0})^{*}$} (A);
  \draw[->, font=\small] (F) to node {$(T(b_{0}))^{*}$} (B);
  \draw[->, font=\small] (C) to node {$(f_{0})_{*}$} (G);  
  \draw[->, font=\small] (E) to node {} (F);  
  \draw[->, font=\small] (F) to node {$SD$} (G);  
\end{tikzpicture}
\end{equation}

The bottom isomorphism is therefore equal to the composite map
\[
H^{0}(X^{n};\mathbb{L}^{+})
\cong
\mathcal{N}(X^{n})
\to
\mathcal{N}(M^{n}_{0})
\xrightarrow[]{t_{0}}
H_{n}(M^{n}_{0};\mathbb{L}^{+})
\xrightarrow[]{(f_{0})_{*}}
H^{st}_{n}(X^{n};\mathbb{L}^{+}).
\]   
Now the commutativity of diagram~\ref{d1}   implies that the map
$
t:\mathcal{N}(X^{n}) \to H^{st}_{n} ( X^{n}; \mathbb{L}^+)
$
is indeed bijective, as asserted in Theorem~\ref{B}.
Details will be given in the forthcoming sections.

\begin{remark}
In the epilogue (cf. Section~\ref{s5}) we shall give an outlook for comparing the exact sequence of a map 
$q:X^{n} \to B,$
where $B$ is a compact metric space, with the controlled surgery sequence, determined by
the map
 $q$
  (cf. Bryant, Ferry, Mio and Weinberger~\cite{BFMW96}).
We are grateful to the referee for suggesting to
also
include a discussion of this interesting
problem.
\end{remark}

\section{Construction of the map $t:\mathcal{N}(X^{n}) \to H^{st}_{n} ( X^{n}; \mathbb{L}^+)$}\label{s2}
We recall the construction of the map $t:\mathcal{N}(X^{n}) \to H^{st}_{n} ( X^{n}; \mathbb{L}^+)$ from
 Hegenbarth and Repov\v{s}~\cite[Section 4]{HeRe20}.
So let us fix an oriented closed  generalized $n$-manifold $X^{n}$ of dimension $n \ge 5$.
If
 $\mathcal{U}$
is a covering of $X^{n}$ by open sets, we denote its {\sl nerve}
by $N(\mathcal{U}).$
If the covering
$\mathcal{U'} \prec \mathcal{U}$ is a refinement of  $\mathcal{U}$,
then
there is a simplicial map
$s:   N(\mathcal{U'}) \to   N(\mathcal{U}).$ 

\begin{proposition}\label{p}
There exists a sequence of open
coverings 
${\{\mathcal{U}_{j} \}}_{j \in \mathbb{N}}$
with the following properties:

(a) for every $j \in \mathbb{N}$,
 $\mathcal{U}_{j+1} \prec \mathcal{U}_{j},$
 and there exists a simplicial map
$s_{j}:N(\mathcal{U}_{j+1}) \to  N(\mathcal{U}_{j});$

(b) for every $j \in \mathbb{N}$,
there exist maps
$\varphi_{j}:X^{n} \to  N(\mathcal{U}_j), \ 
\psi_{j}:N(\mathcal{U}_j) \to X^{n}$
such that 
$\psi_{j} \circ \varphi_{j}: X^{n} \to X^{n}$
is an
$\varepsilon_{j}\rm{-equivalence},$
where
$  \lim_{j \to \infty}{\varepsilon_{j}} = 0;$

(c) there exists a map
$\psi :  \varprojlim_{j} N({\mathcal{U}}_{j}) \to X^n,$
and moreover, if maps 
$s_j : N(\mathcal{U}_{j+1}) \to  N(\mathcal{U}_{j})$
take
$N({\mathcal{U}}_{j+1})$
to a subdivision of
$N({\mathcal{U}}_{j}),$
then 
$\varprojlim_{j} N({\mathcal{U}}_{j})$
can be identified with 
$X^n;$
and

(d) the following diagram is homotopy commutative
\begin{equation}\label{d3}
	\begin{tikzpicture}[baseline=-1cm, node distance=2cm, auto]
	  \node (LU) {};
	  \node (X) [node distance=1cm, below of=LU] {$X^{n}$};
	  \node (NU') [right of=LU] {$N (\mathcal{U}_{j+1})$};
	  \node (NU) [below of=NU'] {$N (\mathcal{U}_{j})$};
	  \node (RU) [right of=NU'] {};
	  \node (X2) [node distance=1cm, below of=RU] {$X^{n}$};
	  \draw[->, font=\small] (X) to node {$\varphi_{j+1}$} (NU');
	  \draw[->, font=\small] (X) to node [swap] {$\varphi_{j}$} (NU);
	  \draw[->, font=\small] (NU') to node {$s_{j}$} (NU);
	  \draw[->, font=\small] (NU') to node {$\psi_{j+1}$} (X2);
	  \draw[->, font=\small] (NU) to node [swap] {$\psi_{j}$} (X2);
	\end{tikzpicture}
	\end{equation}	
\end{proposition}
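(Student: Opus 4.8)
The plan is to realize $X^n$ as an inverse limit of compact polyhedra $N(\mathcal{U}_j)$ (the nerves of a cofinal sequence of open coverings) together with compatible maps $\varphi_j, \psi_j$ relating $X^n$ and $N(\mathcal{U}_j)$, as this is precisely the device that converts the ill-behaved homotopy theory of the ENR $X^n$ into an honest pro-polyhedron on which Steenrod homology and Spanier--Whitehead duality are available. The existence of such a sequence is essentially a consequence of the fact that a finite-dimensional compact ANR has arbitrarily fine open coverings whose nerves approximate it, combined with the standard nerve theorem and the ENR structure. First I would fix an embedding $X^n \hookrightarrow \mathbb{R}^m$ with a regular neighborhood $N^m$ retracting onto $X^n$ via $r\colon N^m \to X^n$; this supplies a metric and allows $\varepsilon$-equivalences to be discussed quantitatively.

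The key steps, in order, are the following. Step (a): choose $\mathcal{U}_1$ a finite open cover of $X^n$ by sets of diameter less than $1$ (possible since $X^n$ is compact metric), and inductively pick $\mathcal{U}_{j+1} \prec \mathcal{U}_j$ a finite open refinement all of whose members have diameter less than $\varepsilon_{j+1}$, with $\varepsilon_j \to 0$; since it is a refinement, every choice of an assignment of each member of $\mathcal{U}_{j+1}$ to a member of $\mathcal{U}_j$ containing it induces a simplicial map $s_j\colon N(\mathcal{U}_{j+1}) \to N(\mathcal{U}_j)$, and one may further arrange $\mathcal{U}_{j+1}$ so that $s_j$ factors through a subdivision, by passing to barycentric-type subordinate refinements. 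Step (b): for each $j$, the cover $\mathcal{U}_j$ has a subordinate partition of unity, giving the canonical map $\varphi_j\colon X^n \to N(\mathcal{U}_j)$; conversely, choosing a point in each member of $\mathcal{U}_j$ and coning off using the ENR retraction $r$ (or directly, since $X^n$ is an ANR, extending the vertex assignment over the simplices) yields $\psi_j\colon N(\mathcal{U}_j) \to X^n$. The composite $\psi_j \circ \varphi_j$ moves every point within a union of members of $\mathcal{U}_j$, hence by a distance controlled by the mesh, so it is an $\varepsilon_j$-equivalence (homotopic to the identity through small tracks, using that $X^n$ is an ANR so that $\varepsilon$-close maps into it are $\varepsilon'$-homotopic). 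Step (c): compatibility of the $\varphi_j$ up to homotopy (which is what diagram~\ref{d3} records) lets one assemble $\psi\colon \varprojlim_j N(\mathcal{U}_j) \to X^n$; when the $s_j$ are subdivision maps the inverse system is, up to homeomorphism, a nested intersection of neighborhoods of $X^n$, and the standard fact that $X^n$ (being a compact finite-dimensional ANR) is the inverse limit of such polyhedral neighborhoods identifies $\varprojlim_j N(\mathcal{U}_j)$ with $X^n$. Step (d): the homotopy commutativity of~\ref{d3} follows because both $\psi_{j+1}$ and $\psi_j \circ s_j$ are ``nerve realization'' maps subordinate to $\mathcal{U}_j$, hence agree up to a homotopy of size $O(\mathrm{mesh}\,\mathcal{U}_j)$, and similarly $s_j \circ \varphi_{j+1}$ and $\varphi_j$ are both partition-of-unity maps into $N(\mathcal{U}_j)$, canonically homotopic by the usual straight-line homotopy in the simplex.

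The main obstacle I anticipate is arranging simultaneously that (i) the simplicial maps $s_j$ genuinely take $N(\mathcal{U}_{j+1})$ into a subdivision of $N(\mathcal{U}_j)$ — not merely a simplicial map to $N(\mathcal{U}_j)$ itself — which is needed for the clean identification $\varprojlim_j N(\mathcal{U}_j) \cong X^n$ in part (c), and (ii) that the homotopies in~(d) can be chosen with uniformly shrinking size so that they pass to the limit. Both are handled by working inside a regular neighborhood in $\mathbb{R}^m$: one takes the $\mathcal{U}_j$ to be traces on $X^n$ of the open stars of vertices in a sequence of successively finer triangulations of (a handle or simplicial neighborhood of) $X^n$, so that the nerves literally are these triangulations, the refinement maps are literally subdivision-induced, and all comparison homotopies are linear in the ambient simplices. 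This reduces everything to the classical PL/ANR approximation machinery, so the proof is a careful but routine bookkeeping of meshes and tracks rather than a new idea; the one genuinely non-formal input is the theorem that a compact finite-dimensional ANR is an inverse limit of its polyhedral neighborhoods, which we invoke rather than reprove.
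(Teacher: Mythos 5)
Your proposal is correct and follows the same standard ANR/nerve machinery that the paper itself defers to by citing Hegenbarth--Repov\v{s}~\cite[Sections 2 and 3]{HeRe20} for (a), (b), (d) and Milnor~\cite[Lemma 2]{Mi95} for (c). The one subtle point you gloss over---arranging the bonding maps $s_j$ to carry $N(\mathcal{U}_{j+1})$ into a subdivision of $N(\mathcal{U}_j)$, which does not follow from refinement alone---is precisely the content of the Milnor lemma you correctly invoke rather than reprove, so no genuine gap remains.
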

Hereafter, we shall assume that property (c) holds.

\begin{proof}
See Hegenbarth and Repov\v{s}~\cite[Sections 2 and 3]{HeRe20} 
for verification of  properties (a), (b), (d),
and  Milnor~\cite[Lemma 2]{Mi95} for  property
(c).
\end{proof}
Let 
$
M(s_{j})=
 N (\mathcal{U}_{j+1})
  \times I 
  \underset{s_j}{\cup}
  N (\mathcal{U}_{j})
$
be the mapping cylinder of the map 
$s_{j}:N(\mathcal{U}_{j+1}) \to  N(\mathcal{U}_{j}).$
Using property Proposition~\ref{p} (d), we can form the mapping telescope 
$
F_{0}= \underset{j \in \mathbb{N}}{\cup} M(s_{j})
$
and the obvious maps
\[
X^{n} \times [j, j+1]
\xrightarrow[]{\varphi_{j} \times Id_{[j, j+1]}}
M(s_{j})
\xrightarrow[]{\psi_{j} \times Id_{[j, j+1]}}
X^{n} \times [j, j+1]
\]   
fit together to give the map
$
X^{n}\times \mathbb{R}_{+} 
\xrightarrow[]{\Gamma}
F_{0}
\xrightarrow[]{\Lambda}
X^{n}\times \mathbb{R}_{+}. 
$

Here, $F_{0}$
is a locally finite complex which can be completed to give a complex $F$ such that (cf. Hegenbarth and Repov\v{s}~\cite[Section 3]{HeRe20} 
for details):

(i)  at the $\infty$-end we add
$
\underset{j}{\varprojlim} N({\mathcal{U}}_{j}) = X^{n};
$

(ii) at the $0$-end we add a cone with the cone point $c_{0}.$ 

The complex $F_{0}$ (resp. $F$) 
is an open (resp. closed)
fundamental complex of the (compact metric) space $X^{n}$.
If $\mathbb{E}$ is an arbitrary spectrum
and 
$H^{lf}_{*}(F_{0}; \mathbb{E})$ denotes the locally finite homology
of $F_{0}$, then
the Steenrod
homology sati\-sfies the following axiom
\[
H^{lf}_{*}(F_{0}; \mathbb{E})
\cong
H^{st}_{*}(F, X^{n},\{c_{0}\}; \mathbb{E}).
\]   
Note that $F$ is contractible, hence we have the
following  isomorphism
\[
H^{st}_{m}(F, X^{n},\{c_{0}\}; \mathbb{E})
\xrightarrow[\cong]{\partial}
H^{st}_{m-1}(X^{n}; \mathbb{E}).
\]   

We can now outline the construction of the map 
$t:\mathcal{N}(X^{n}) \to H^{st}_{n} ( X^{n}; \mathbb{L}^+)$
 (cf. Hegenbarth and Repov\v{s}~\cite[Section 4]{HeRe20}).
 Let
 $(f,b)$
 be a normal degree one map, i.e.
 $f:M^{n} \to X^{n}$ is of degree one and 
 $b:\nu_{M^{n}} \to \xi$
 is a bundle map covering $f$. 
 As before, 
 $(f_{0},b_{0})$
 denotes the canonical map, i.e.
 $
 f_{0}:M^{n}_{0} \to X^{n}, \ 
  b_{0}:\nu_{M^{n}_{0}} \to \xi_{0}.
  $
 Consider the following bundles over $F_{0}$:
$
\eta = \Lambda^{*}(\xi \times \mathbb{R}_{+}), \ 
\eta_{0} = \Lambda^{*}(\xi_{0} \times \mathbb{R}_{+}).
$
Then
$
\Gamma^{*}(\eta)\cong \xi \times \mathbb{R}_{+}, \ 
\Gamma^{*}(\eta_{0})\cong \xi_{0} \times \mathbb{R}_{+},
$
since 
$\Lambda \circ \Gamma$
is homotopic to $Id_{X^{n}\times \mathbb{R}_{+} }$.

One obtains bundle maps $(\Phi, B)$ and $(\Phi_{0}, B_{0})$ from the following compositions 
\[
\Phi: M^{n} 
\times
\mathbb{R}_{+} \xrightarrow[]{f \times Id_{\mathbb{R}_{+}}} 
X^{n} \times \mathbb{R}_{+}
\xrightarrow[]{\Gamma}
F_{0},
\]   
\[
B: \nu_{M^{n}} 
\times
\mathbb{R}_{+} \xrightarrow[]{b  \times Id_{\mathbb{R}_{+}}} 
\xi \times \mathbb{R}_{+}
\xrightarrow[]{\Gamma}
\eta,
\]   
\[
\Phi_{0}: M^{n}_{0} 
\times
\mathbb{R}_{+} \xrightarrow[]{f_{0} \times Id_{\mathbb{R}_{+}}} 
X^{n} \times \mathbb{R}_{+}
\xrightarrow[]{\Gamma}
F_{0},
\]   
\[
B_{0}: \nu_{M^{n}_{0}} 
\times
\mathbb{R}_{+} \xrightarrow[]{b_{0} \times  Id_{\mathbb{R}_{+}}} 
\xi_{0} \times \mathbb{R}_{+}
\xrightarrow[]{\Gamma}
\eta_{0}.
\]   

Their mapping cylinders 
 $M(\Phi, B)$ (resp. $M(\Phi_{0}, B_{0})$)
 are
normal spaces with boundaries
$({M^{n}} 
\times
\mathbb{R}_{+})
 \amalg
F_{0}$
(resp.
$({M^{n}_{0}} 
\times
\mathbb{R}_{+})
 \amalg
F_{0}$).
Gluing them along $F_{0}$ 
yields the normal space
\[
N=M(F, B) 
\underset{F_{0}}{\cup}
- M(F_{0}, B_{0}), \ \
 \partial N=
{M^{n}}
\times
\mathbb{R}_{+}
\underset{F_{0}}{\cup}
{M^{n}_{0}} 
\times
\mathbb{R}_{+},
\]
where the minus sign denotes the opposite orientation
on $M(F_{0},B_{0})$.

This normal space $N$ can be decomposed into adic pieces to define an element in
$H^{lf}_{n+2}(F_{0};\mathbb{\Omega}^{NSTOP})$,
 where 
 $\mathbb{\Omega}^{NSTOP}$
 is the semi-simplicially defined spectrum of adic normal spaces with manifold boundary
(cf. K\" uhl, Macko and  Mole~\cite[Section 11]{KMM13}
for the precise definition).

There is a similar spectrum
$\mathbb{\Omega}^{NPD},$
where the boundaries are Poincar\'{e} duality spaces, and
there exists an obvious map
$\mathbb{\Omega}^{NSTOP}
\to
\mathbb{\Omega}^{NPD}.$
Moreover, there is a map of spectra
$
\mathbb{\Omega}^{NPD}
\to
\mathbb{L}^{+}
$
(cf. Ranicki~\cite[p.287]{Ra79}), 
inducing isomorphisms in homology theory
(cf.
Hausmann and Vogel~\cite{HaVo93},
Levine~\cite{Le72},
Quinn~\cite{Qu72}).
The composition
$\mathbb{\Omega}^{NSTOP}
\to
\mathbb{\Omega}^{NPD}
\to
\mathbb{L}^{+}
$
is called $sign^{\mathbb{L}}$
in
K\" uhl, Macko and  Mole~\cite[p.232]{KMM13}.

A word about notations: we shall denote the element represented by 
$M(\Phi, B) 
\underset{F_{0}}{\cup}
-M(\Phi_{0}, B_{0})$
by
$\{f,b\}-\{f_{0},b{_0}\}
\in
H^{lf}_{n+2}(F_{0};\mathbb{\Omega}^{NSTOP})$
and its image under
\[
H^{lf}_{n+2}(F_{0};\mathbb{\Omega}^{NSTOP})
\xrightarrow[]{\cong}
H^{st}_{n+2}(F,X^{n},\{c_{0}\};\mathbb{\Omega}^{NSTOP})
\]   
\[
\xrightarrow[\cong]{\partial}
H^{st}_{n+1}(X^{n};\mathbb{\Omega}^{NSTOP})
\xrightarrow[]{sign^{\mathbb{L}}} 
H^{st}_{n}(X^{n};\mathbb{L}^{+})
\]   
will be denoted by
$[f,b]-[f_{0},b_{0}].$

Finally, one can then  show that the map
$t:\mathcal{N}(X^{n}) \to H^{st}_{n} ( X^{n}; \mathbb{L}^+)$
sending
$(f,b)$ to $[f,b]-[f_{0},b_{0}],$ 
is well-defined (cf. Hegenbarth and Repov\v{s}~\cite[Theorem 5.1]{HeRe20}).

\section{Factorization of the map $t:\mathcal{N}(X^{n}) \to H^{st}_{n} ( X^{n}; \mathbb{L}^+)$}\label{s3}

This section is devoted to studying diagram~\ref{d2}.
\subsection*{I}
First, one has to define the map
$
\mathcal{N}(X^{n})
\to
\mathcal{N}(M^{n}_{0}).
$
We shall keep the notations from Section~\ref{s2}, so 
$(f_{0},b_{0})$
denotes the canonical
surgery problem for an oriented closed  generalized $n$-manifold $X^{n}$
with
$f_{0}:M^{n}_{0} \to X^{n}, \  b_{0}:\nu_{M^{n}_{0}} \to \xi_{0}.$

Let 
$(f,b)$
represent an element in
$
\mathcal{N}(X^{n})$,
where
$ f:M^{n} \to X^{n}, \  b:\nu_{M^{n}} \to \xi.$
We shall also write 
$
\xi_{0}, \xi:X^{n} \to BSTOP
$
 for the corresponding
 classifying maps.
Their compositions with 
$\mathcal{J}: BSTOP \to BSG$
are homotopic.

Consider now the bundles
$(f_{0})^{*}(\xi_{0})$
and
$(f_{0})^{*}(\xi)$
over
$M^{n}_{0}$.
Observe that
 $(f_{0})^{*}(\xi_{0}) = \nu_{M^{n}_{0}}$
 and that 
$(f_{0})^{*}(\xi)$
is fiber homotopy equivalent to 
$ \nu_{M^{n}_{0}}.$
In other words, 
$(f_{0})^{*}(\xi)$
is a $TOP$-reduction of the Spivak fibration of the manifold 
$M^{n}_{0}$.

Therefore
$(f_{0})^{*}(\xi)$
defines a surgery problem
$
f':{M'}^{n} \to M^{n}_{0}, \ 
b':\nu_{{M'}^{n}} \to (f_{0})^{*}(\xi),
$
which we shall denote by
$(f',b').$
These are well-known constructions (cf. 
Browder~\cite[Section II.4]{Br72},
Madsen and Milgram~\cite[Chapter 2]{MaMi79},
Wall~\cite[Chapter 10]{Wa99}).
\begin{lemma}\label{l1}
The composition of the normal maps
$$
{M'}^{n} 
\xrightarrow[]{f'}
M^{n}_{0} 
\xrightarrow[]{f_{0}}
  X^{n} ,\  \
  \nu_{{M'}^{n}} 
\xrightarrow[]{b'}
(f_{0})^{*}(\xi) 
\xrightarrow[]{\tilde{f}_{0}}
  \xi, 
$$
where $\tilde{f}_{0}$ is the obvious bundle map covering the map $f_{0}$, is normally bordant to $(f,b).$
\end{lemma}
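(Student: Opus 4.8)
The plan is to track the Spivak fibration and its reductions carefully, and then to appeal to the standard theory of surgery problems over a fixed Poincar\'{e} duality space (here $M^{n}_{0}$), which identifies normal bordism classes of degree one normal maps with homotopy classes of reductions, or equivalently with stable homotopy classes of maps into $G/TOP$. Concretely, first I would recall that $\nu_{X^{n}}$ has (at least) two $TOP$-reductions $\xi_{0}$ and $\xi$, related by a difference element in $[X^{n}, G/TOP]$, and that the normal map $(f,b)$ is precisely the Pontryagin--Thom datum associated to $\xi$ (the reduction that $(f,b)$ carries) together with a choice of degree one map. Pulling back along $f_{0}$, the reductions $(f_{0})^{*}(\xi_{0}) = \nu_{M^{n}_{0}}$ and $(f_{0})^{*}(\xi)$ of $\nu_{M^{n}_{0}}$ differ by the image of that difference element under $(f_{0})^{*}$, and $(f',b')$ is by construction the surgery problem associated to the reduction $(f_{0})^{*}(\xi)$ via Pontryagin--Thom.

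Second, I would form the composite normal map $(f_{0}\circ f', \tilde f_{0}\circ b')$ and compute its underlying bundle data: since $\tilde f_{0}$ covers $f_{0}$ and $b'$ covers $f'$, the composite bundle map covers $f_{0}\circ f'$ and has target $\xi$, so $(f_{0}\circ f',\tilde f_{0}\circ b')$ is a degree one normal map into $X^{n}$ carrying the reduction $\xi$ — exactly the same reduction as $(f,b)$. Here one uses that $f'$ has degree one (it is a surgery problem over $M^{n}_{0}$) and $f_{0}$ has degree one, so the composite has degree one. Thus both $(f,b)$ and $(f_{0}\circ f', \tilde f_{0}\circ b')$ are elements of $\mathcal{N}(X^{n})$ refining the \emph{same} $TOP$-reduction of $\nu_{X^{n}}$.

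Third, I would invoke the classical fact (Browder~\cite[Section II.4]{Br72}, Wall~\cite[Chapter 10]{Wa99}, Madsen--Milgram~\cite[Chapter 2]{MaMi79}) that for a fixed Poincar\'{e} complex with a fixed $TOP$-reduction of its Spivak fibration, the normal bordism class of a degree one normal map realizing that reduction is \emph{uniquely determined} — equivalently, degree one normal maps $(g,c): Y^{n}\to X^{n}$ are classified up to normal bordism by the homotopy class of the reduction $c$ classifies, via the Pontryagin--Thom / $G/TOP$ correspondence. Since $(f,b)$ and $(f_{0}\circ f',\tilde f_{0}\circ b')$ induce the same reduction $\xi$, they represent the same class in $\mathcal{N}(X^{n})$; that is, they are normally bordant.

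The main obstacle I anticipate is making precise the phrase "$(f',b')$ is the surgery problem defined by $(f_{0})^{*}(\xi)$'' in a way that is \emph{compatible with composition} of bundle maps and of Thom collapses, so that the composite Pontryagin--Thom construction for $(f_{0}\circ f', \tilde f_{0}\circ b')$ genuinely recovers the Thom-collapse datum for the reduction $\xi$ of $\nu_{X^{n}}$. One has to check that the Thom space collapse $S^{m}\to T(\nu_{M^{n}})\to T((f_{0})^{*}(\xi))$ followed by $T(\tilde f_{0}): T((f_{0})^{*}(\xi))\to T(\xi)$ agrees, up to the appropriate homotopy, with the collapse $S^{m}\to T(\nu_{X^{n}})\to T(\xi)$ used to define the $\xi$-refined surgery problem directly over $X^{n}$; this is the standard naturality of Pontryagin--Thom under pullback of bundles along $f_{0}$, but it needs to be stated with enough care that the resulting normal bordism is exhibited, not merely asserted. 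Everything else is bookkeeping with degrees and with the bijection between reductions and normal maps.
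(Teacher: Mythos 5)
Your proposal is correct and follows essentially the same route as the paper: the ``main obstacle'' you single out --- verifying that the composite Thom collapse $S^{m}\to T(\nu_{M^{n}_{0}})\to T((f_{0})^{*}(\xi))\xrightarrow{T(\tilde f_{0})}T(\xi)$ is homotopic to the collapse $S^{m}\to T(\nu_{X^{n}})\to T(\xi)$ defining $(f,b)$ --- is exactly the content of the paper's argument (the homotopy commutative diagram~\ref{d6}), after which transversality applied to that homotopy produces the normal bordism directly. One remark: once that homotopy is in hand you no longer need the classification of normal invariants by reductions invoked in your third step, and without it that step alone would be incomplete, since having the same target bundle $\xi$ is a priori weaker than inducing the same reduction of $\nu_{X^{n}}$ (the fibre homotopy equivalence between the sphere bundle of $\xi$ and $\nu_{X^{n}}$ determined by the composite must be checked to agree with the given one).
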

\begin{proof}
For the proof we have to describe 
$(f_{0},b_{0}),(f,b),$
and
$(f',b')$
in more details.
Suppose that $X^{n}$ is embedded into  $S^{m},$ for some sufficiently large $m\ge n$, with a regular neighborhood $W^{m}\subset S^{m}$ and a retraction $r:W^{m} \twoheadrightarrow X^{n}$.
Thus 
$
\restr{r}{\partial W^{m}}:
{\partial W^{m}} 
 \twoheadrightarrow
 X^{n}
$ 
is homotopy equivalent to the spherical fibration $\nu_{X^{n}}$, giving rise to
$
\beta: S^{m} \to W^{m}/\partial W^{m}  \to T(\nu_{X^{n}}).
$

The $TOP$-reductions
$\xi_{0}$ and $\xi$ of $\nu_{X^{n}}$ then yield the following homotopy commutative diagram
\begin{equation}\label{d4}
\begin{tikzpicture}[baseline=-1cm,node distance=2cm, auto]
  \node (LU) {};
  \node (X) [node distance=1cm, below of=LU] {$T(\nu_{X^{n}})$};
  \node (NU') [right of=LU] {$T(\xi)$};
  \node (NU) [below of=NU'] {$T(\xi_{0})$};
  \draw[->, font=\small] (X) to node {} (NU');
  \draw[->, font=\small] (X) to node [swap] {} (NU);
  \draw[->, font=\small] (NU) to node {$h$} (NU');
\end{tikzpicture}
\end{equation}

Note that 
$h: T(\xi_{0}) \to T(\xi)$ 
 is induced by a fiber homotopy equivalence
$\dot{\xi}_{0} \sim \nu_{X^{n}} \sim \dot{\xi},$
where
$\dot{\xi}_{0}$
 (resp.
 $\dot{\xi}$) denotes the sphere bundles of 
 $\xi_{0}$
 (resp. $\xi$). 

Denote the compositions with
$\beta$
by
$\alpha_{0}:S^{m} \to T(\xi_{0}), \  \alpha:S^{m} \to T(\xi).$ 
They can be made transverse to 
$X^{n} \subset T(\xi_{0})$
 (resp. $T(\xi)$) in order to obtain
$\alpha^{-1}_{0}(X^{n})=M^{n}_{0}$
 (resp. $\alpha^{-1}(X^{n})=M^{n}$), and $b_{0}$ (resp. $b$) are the obvious maps from their normal bundles in $S^{m}.$
 Moreover, $\alpha_{0}$ (resp. $\alpha$) factor as
 $S^{m} \to T(\nu_{M^{n}_{0}}) \to T(\xi_{0})$
 (resp.  $S^{m} \to T(\nu_{M^{n}}) \to T(\xi)$)
 and we have the following homotopy commutative diagram 
\begin{equation}\label{d5}
\begin{tikzpicture}[baseline=-2.0cm, node distance=3.5cm, auto]
 \node (LU) {};
  \node (X) [node distance=2cm, below of=LU] {$S^{m}$};
  \node (NU') [right of=LU] {$T(\nu_{M^{n}})$};
  \node (NU) [below of=NU'] {$T(\nu_{M^{n}_{0}})$};
  \draw[->, font=\small] (X) to node {} (NU');
  \draw[->, font=\small] (X) to node [swap] {} (NU); 
   \node (C) [right of=B] {$T(\xi) $};  
   \node (G) [below of=C] {$T(\xi_{0}) $}; 
  \draw[shorten <=-0.9cm,->, font=\small] (B) to node {} (C); 
  \draw[->, font=\small] (X) to node {$\alpha$} (C);
  \draw[->, font=\small] (X) to node {$\alpha_{0}$} (G);
  \draw[->, font=\small] (G) to node {$h$} (C); 
  \draw[->, font=\small] (NU) to node {} (G); 
  \end{tikzpicture}
\end{equation}

Note that 
$h: T(\xi_{0}) \to T(\xi)$ 
induces a homotopy equivalence
$
\bar{h}: T((f_{0})^{*}(\xi_{0})) \to T((f_{0})^{*}(\xi)).
$
However,
$(f_{0})^{*}(\xi_{0})=\nu_{M^{n}_{0}},$
so we get the following homotopy commutative diagram
\begin{equation}\label{d6}
\begin{tikzpicture}[baseline=-1.8cm,  ->,>=stealth', auto, node distance=3.5cm, auto]
  \node (A) {$S^{m}$};
  \node (B) [right of=A] {$T(\nu_{M^{n}})$};
   \node (C) [right of=B] {$T(\xi)$}; 
  \node (E) [below of=A] {$T(\nu_{M^{n}_{0}})$};
  \node (F) [below of=B] {$T((f_{0})^{*}(\xi_{0}))$};
   \node (G) [below of=C] {$T((f_{0})^{*}(\xi))$}; 
    \node (X) [node distance=2cm, below of=B] {$T(\xi_{0})$};
     \draw[->, font=\small] (F) to node {$T({\tilde{f}}_{0})$} (X);
      \draw[->, font=\small] (X) to node {$h$} (C);
  \draw[->, font=\small] (A) to node {} (B);
  \draw[->, font=\small] (B) to node {} (C); 
  \draw[->, font=\small] (A) to node {} (E);
  \draw[->, font=\small] (A) to node {$\alpha'$} (E);
  \draw[->, font=\small] (A) to node {$\alpha_{0}$} (X);
  \draw[->, font=\small] (G) to node {$T({\tilde{f}_{0}})$} (C); 
  \draw[->, font=\small] (E) to node {$=$} (F);  
  \draw[->, font=\small] (F) to node {$\bar{h}$} (G);
  \path[every node/.style={font=\sffamily\small}]
   (A) edge[bend left] node [left,above] {$\alpha$} (C); 
\end{tikzpicture}
\end{equation}

Here, 
$\tilde{f}_{0}: (f_{0})^{*}(\xi) \to \xi$
 (resp. $\tilde{f}_{0}: (f_{0})^{*}(\xi_{0}) \to \xi_{0}$)
are the obvious bundle maps 
over 
$f_{0}:M^{n}_{0} \to X^{n}$ (for simplicity we use the same symbol $\tilde{f}_{0}$ for both maps),
and
$T(\tilde{f}_{0})$
is the induced map between the Thom spaces, so 
$T(\tilde{f}_{0})^{-1}(X^{n})=M^{n}_{0},$
similarly for 
$
T(\tilde{f}_{0}):T((f_{0})^{*}(\xi_{0})) \to T(\xi_{0}).
$
Note that $h$ and $\bar{h}$
are not induced by bundle maps.

By making the composition
$
S^{m}  \xrightarrow[]{\alpha'} T(\nu_{M^{n}_{0}}) = T((f_{0})^{*}(\xi_{0})) \xrightarrow[]{\bar{h}} T((f_{0})^{*}(\xi))
$
transverse to $M^{n}_{0},$
one obtains the surgery problem
\[
{M'}^{n}=(\bar{h}\circ {\alpha'})^{-1}(M^{n}_{0}) \to M^{n}_{0}, \ \  b':\nu_{{M'}^{n}} \to (f_{0})^{*}(\xi).
\]   
Homotopy commutativity of diagram~\ref{d6} then implies that
\[
{M'}^{n}
 \xrightarrow[]{f'}  
 M^{n}_{0}  
 \xrightarrow[]{f_{0}}  
 X^{n}, \ \ 
 \nu_{{M'}^{n}} 
 \xrightarrow[]{b'}  
 (f_{0})^{*}(\xi)
  \xrightarrow[]{\tilde{f}_{0}}  
 \xi
\]   
is normally bordant to 
$(f,b).$
To see this,  observe that  $(f,b)$ is obtained from the upper arrow  $\alpha$,
whereas the composition  $(f_{0},b_{0}) \circ {(f',b')}$
is obtained from 
the composition of the arrows 
$\downarrow$$_{\longrightarrow}$$\uparrow$, that is  
$T({\tilde{f}}_{0}) \circ \bar{h} \circ {\alpha'}.$
Note that $T({\tilde{f}}_{0})$  produces $(f_{0},b_{0})$  and  $\bar{h}\circ \alpha'$ gives $(f',b')$.
This completes the proof of Lemma~\ref{l1}. 
\end{proof}

\begin{remark}
One might expect that homotopy comutativity of diagram~\ref{d6}
implies that 
$(f_{0},b_{0})$
and 
$(f,b)$
are normally bordant. However, this is not the case
since $h$ (resp. $\bar{h}$)
are not induced by $TOP$-bundle maps.
\end{remark}
The association
$(f,b) \to (f',b')$
defines a map
$\mathcal{N}(X^{n})
\to 
\mathcal{N}(M^{n}_{0}).
 $
 It depends on the fixed surgery problems
 $(f_{0},b_{0}),$
 and
$
Id_{M^{n}_{0}} : 
 M^{n}_{0} 
 \xrightarrow[]{\cong}
 M^{n}_{0}, \ 
 Id_{\nu_{M^{n}_{0}}}: 
 \nu_{M^{n}_{0}}
 \xrightarrow[]{\cong}
 \nu_{M^{n}_{0}}.
$
 We shall relate this map using the following identifications
 (cf. K\" uhl, Macko and  Mole~\cite[Chapter 14, in particular Section 14.23]{KMM13})
 $$\mathcal{N}(X^{n})
 \to
 [X^{n}, G/TOP], \ \ \mathcal{N}(M^{n}_{0})
 \to
 [M^{n}_{0}, G/TOP].$$
 
Given 
 $
f:{M}^{n} \to X^{n}, \ 
 b:\nu_{{M}^{n}} \to \xi,
$
we know that $\xi \oplus (-\xi_{0}): X^{n} \to BTOP$ classifies the Whitney sum of $\xi$ and $-\xi_{0}$.
The composition with 
$\mathcal{J}:BTOP \to BSG$ is homotopic to the constant map, hence it yields a map
$X^{n} \to G/TOP.$
This defines a bijection
$\mathcal{N}(X^{n}) \to [X^{n},G/TOP],$
depending on $(f_{0},b_{0}).$

Let us denote the image of $(f,b)\in \mathcal{N}(X^{n})$
in
$[X^{n},G/TOP]$
by $[\xi - \xi_{0}].$
Similarly, 
 $\mathcal{N}(M^{n}_{0}) \to [M^{n}_{0},G/TOP]$
 can be defined using  
 $Id_{ M^{n}_{0}}: 
 M^{n}_{0} 
 \xrightarrow[]{\cong}
 M^{n}_{0}, \ 
 Id_{\nu_{M^{n}_{0}}}: 
 \nu_{M^{n}_{0}}
 \xrightarrow[]{\cong}
 \nu_{M^{n}_{0}}.
 $
 The construction above then implies the following corollary.
 
 \begin{corollary}\label{c1}
  The diagram
 \begin{equation}\label{d8}
\begin{tikzpicture}[baseline=-1.0cm, node distance=2.5cm, auto, ]
  \node (Xl) {$\mathcal{N}(M^{n}_{0})$};
  \node (Fl) [right of=Xl] {$[M^{n}_{0},G/TOP]$};
  \node (Xl1) [below of=Xl, yshift=1cm] {$\mathcal{N}(X^{n})$};
  \node (Fl1) [below of=Fl, yshift=1cm] {$ [X^{n},G/TOP]$};
  \draw[->, font=\small] (Xl) to node [midway, above]{} (Fl);
  \draw[->, font=\small]  (Xl1) to node {} (Xl);
  \draw[->, font=\small] (Fl1) to node [swap] {$(f_{0})^{*}$} (Fl);
  \draw[->, font=\small] (Xl1) to node [midway, above]{} (Fl1);
\end{tikzpicture}
\end{equation}
 \end{corollary}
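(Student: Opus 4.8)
The plan is to verify commutativity of diagram~\ref{d8} by a direct chase on the level of classifying maps into $G/TOP$, exploiting that \emph{both} identifications $\mathcal{N}(X^{n})\to[X^{n},G/TOP]$ and $\mathcal{N}(M^{n}_{0})\to[M^{n}_{0},G/TOP]$ are defined in the same way: by forming the Whitney sum with the negative of a fixed reference reduction ($-\xi_{0}$ over $X^{n}$, and $-\nu_{M^{n}_{0}}$ over $M^{n}_{0}$, the latter classified by $Id_{M^{n}_{0}}$) together with the fibre homotopy trivialization of the resulting sum coming from the equivalences of the associated sphere bundles with the Spivak fibration.

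First I would record what the right-hand then bottom composite does. Starting from a normal map $(f,b)$ with $f\colon M^{n}\to X^{n}$ and $b\colon\nu_{M^{n}}\to\xi$, its image in $[X^{n},G/TOP]$ is the class $[\xi-\xi_{0}]$, i.e. the map $X^{n}\to G/TOP$ determined by the $TOP$-bundle $\xi\oplus(-\xi_{0})$ equipped with the fibre homotopy trivialization $\dot{\xi}\oplus(-\dot{\xi}_{0})\simeq *$ induced by $\dot{\xi}_{0}\simeq\nu_{X^{n}}\simeq\dot{\xi}$ (the equivalence realizing $h$ in diagram~\ref{d4}). Composing with $(f_{0})^{*}$ yields the class over $M^{n}_{0}$ classified by $(f_{0})^{*}\xi\oplus(-(f_{0})^{*}\xi_{0})$; since $(f_{0})^{*}\xi_{0}=\nu_{M^{n}_{0}}$, this is $[(f_{0})^{*}\xi-\nu_{M^{n}_{0}}]$, with fibre homotopy trivialization the $f_{0}$-pullback of the one used over $X^{n}$.

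Next I would compute the left-hand then bottom composite. By construction the map $\mathcal{N}(X^{n})\to\mathcal{N}(M^{n}_{0})$ sends $(f,b)$ to $(f',b')$ with $f'\colon M'^{n}\to M^{n}_{0}$ and $b'\colon\nu_{M'^{n}}\to(f_{0})^{*}\xi$, the $TOP$-reduction of the Spivak fibration of $M^{n}_{0}$ used here being exactly $(f_{0})^{*}\xi$, which is fibre homotopy equivalent to $\nu_{M^{n}_{0}}$ as observed before Lemma~\ref{l1}. Applying the identification $\mathcal{N}(M^{n}_{0})\to[M^{n}_{0},G/TOP]$ — whose reference bundle is $\nu_{M^{n}_{0}}$ — then produces the class classified by $(f_{0})^{*}\xi\oplus(-\nu_{M^{n}_{0}})$ together with the fibre homotopy trivialization induced by $(f_{0})^{*}\dot{\xi}\simeq\nu_{M^{n}_{0}}$. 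By the transversality description in the proof of Lemma~\ref{l1} — where $\bar{h}$ over $M^{n}_{0}$ in diagram~\ref{d6} is defined as the $f_{0}$-pullback $\overline{(f_{0})^{*}h}$ of $h$ over $X^{n}$ in diagrams~\ref{d4}–\ref{d5} — this fibre homotopy trivialization agrees with the $f_{0}$-pullback of the one used over $X^{n}$. Hence the two composites coincide in $[M^{n}_{0},G/TOP]$, proving the corollary.

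The only point requiring care — and the main obstacle — is matching the fibre homotopy trivializations, not merely the underlying stable $TOP$-bundles, on the two sides: the $G/TOP$-lift remembers the nullhomotopy of $\mathcal{J}$ applied to $\xi\oplus(-\xi_{0})$, and one must check this nullhomotopy pulls back correctly under $f_{0}$. This is precisely what diagram~\ref{d6} of Lemma~\ref{l1} encodes, since $\bar{h}=\overline{(f_{0})^{*}h}$: the homotopy $\dot{\xi}_{0}\simeq\dot{\xi}$ over $X^{n}$ restricts, after applying $(f_{0})^{*}$, to the homotopy $(f_{0})^{*}\dot{\xi}_{0}=\nu_{M^{n}_{0}}\simeq(f_{0})^{*}\dot{\xi}$ over $M^{n}_{0}$. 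Granting this, the remaining verification is a formal consequence of the naturality of the Sullivan–Wall identification $\mathcal{N}(-)\cong[-,G/TOP]$ under pullback of normal maps, as in K\"uhl–Macko–Mole~\cite[Section 14.23]{KMM13}.
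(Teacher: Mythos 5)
Your proposal is correct and follows essentially the same route as the paper, which simply observes that the construction preceding the corollary (the definition of $(f,b)\mapsto(f',b')$ via the reduction $(f_{0})^{*}(\xi)$ of $\nu_{M^{n}_{0}}$, together with the two reference-based identifications with $[-,G/TOP]$) yields $[(f_{0})^{*}(\xi)-\nu_{M^{n}_{0}}]$ for both composites. Your explicit matching of the fibre homotopy trivializations through $\bar{h}=(f_{0})^{*}h$ in diagram~\ref{d6} is exactly the content the paper leaves implicit, so nothing is missing.
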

 commutes. Moreover,
 $(f_{0})^{*}([\xi - \xi_{0}])=
 [(f_{0})^{*}(\xi) - \nu_{M^{n}_{0}}].
 $\qed 
 
 \subsection*{II}
 Next, we show how $(f',b')$ 
 can be used to
 calculate
 $t(f,b)\in H^{st}_{n}(X^{n}; \mathbb{L}^{+}).$
 By crossing $(f',b')$
 with
 $\mathbb{R}_{+},$
 one gets a normal map
 \[
f'\times Id_{\mathbb{R}_{+}}:{M'}^{n}\times \mathbb{R}_{+} \to M^{n}_{0}\times \mathbb{R}_{+}, \ \ 
 b'\times Id_{\mathbb{R}_{+}}:\nu_{{M'}^{n}}\times \mathbb{R}_{+} \to (f_{0})^{*}(\xi)\times \mathbb{R}_{+},
\]   
 denoted by
 $(f',b')\times Id_{\mathbb{R}_{+}}.$
 The mapping cylinder
 $M((f',b')\times Id_{\mathbb{R}_{+}})$
 of the map $(f',b')\times Id_{\mathbb{R}_{+}}$
 is a normal space with manifold boundary, hence it defines an element
\[
M((f',b')\times Id_{\mathbb{R}_{+}})\in  H^{lf}_{n+2}(M^{n}_{0}\times \mathbb{R}_{+}; \mathbb{\Omega}^{NSTOP}).
\]   
 
 \begin{lemma}\label{l2}
 Let  $\Gamma_{0}:M^{n}_{0}\times \mathbb{R}_{+}
\to
 F_{0}$ be defined as the composition of the  maps
  $
 f_{0}\times Id_{\mathbb{R}_{+}}: M^{n}_{0}\times \mathbb{R}_{+}
\to
 X^{n}\times \mathbb{R}_{+}$
 and
 $
\Gamma:  X^{n}\times \mathbb{R}_{+}
 \to
 F_{0}.
$
 Then $\Gamma_{0}$ induces a homomorhism
  \[
  (\Gamma_{0})_{*}:  
 H^{lf}_{n+2}(M^{n}_{0}\times \mathbb{R}_{+}; \mathbb{\Omega}^{NSTOP})
 \to
 H^{lf}_{n+2}(F_{0}; \mathbb{\Omega}^{NSTOP}),
 \]   
 such that
 $
 (\Gamma_{0})_{*}([M((f',b')\times Id_{\mathbb{R}_{+}})])=
\{f,b\}-\{f_{0},b_{0}\}.
$  
 \end{lemma}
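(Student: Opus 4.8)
The plan is to unwind the definitions of $\{f,b\}-\{f_0,b_0\}$ and of $(\Gamma_0)_*([M((f',b')\times Id_{\mathbb{R}_+})])$ and to exhibit an explicit normal bordism of $F_0$-adic normal spaces connecting them. Recall from Section~\ref{s2} that $\{f,b\}-\{f_0,b_0\}$ is represented by the normal space
\[
N=M(\Phi,B)\underset{F_0}{\cup}-M(\Phi_0,B_0),
\]
where $\Phi=\Gamma\circ(f\times Id_{\mathbb{R}_+})$, $\Phi_0=\Gamma\circ(f_0\times Id_{\mathbb{R}_+})$, and similarly for $B,B_0$ on the bundle level. On the other hand $\Gamma_0=\Gamma\circ(f_0\times Id_{\mathbb{R}_+})$, so $(\Gamma_0)_*([M((f',b')\times Id_{\mathbb{R}_+})])$ is represented by the image under $\Gamma_0$ of the mapping cylinder of $(f',b')\times Id_{\mathbb{R}_+}$; concretely this is the normal space $M(\Psi,C)$ where $\Psi:{M'}^n\times\mathbb{R}_+\to F_0$ is the composite
\[
{M'}^n\times\mathbb{R}_+\xrightarrow{f'\times Id}M^n_0\times\mathbb{R}_+\xrightarrow{\Gamma_0}F_0,
\]
with the bundle data $C$ obtained by composing $b'\times Id$ with the bundle map over $\Gamma_0$ covering $\Phi_0$. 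Thus both sides are cylinder-type normal spaces over $F_0$, and the task is to compare them.

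The key geometric input is Lemma~\ref{l1}: the composite normal map $(f_0,b_0)\circ(f',b')$ is normally bordant to $(f,b)$ over $X^n$. First I would take such a normal bordism $(W,\beta)$ between $(f,b)$ and $(f_0\circ f',\tilde f_0\circ b')$, i.e. a normal map $W^{n+1}\to X^n\times I$ restricting to the two given normal maps on the ends. Crossing with $\mathbb{R}_+$ and composing with $\Gamma$ produces a normal space over $F_0$ which, after the usual mapping-cylinder/telescoping manipulations, furnishes a locally finite bordism in $H^{lf}_{n+2}(F_0;\mathbb{\Omega}^{NSTOP})$ between the class of $M(\Phi,B)\underset{F_0}{\cup}-M(\Phi_0,B_0)$ and the class of the normal space built from $(f_0\circ f',\tilde f_0\circ b')\times Id_{\mathbb{R}_+}$ glued to $-M(\Phi_0,B_0)$ along $F_0$. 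It then remains to identify this latter glued normal space with $M(\Psi,C)$, the image of $M((f',b')\times Id_{\mathbb{R}_+})$ under $(\Gamma_0)_*$. This identification is the point where the canonical problem $(f_0,b_0)$ enters structurally: gluing the mapping cylinder of $f_0\circ f'\times Id$ (which maps to $F_0$ via $\Gamma$) to $-M(\Phi_0,B_0)$ along the common copy of $F_0$ collapses, up to the natural deformation retraction of a mapping cylinder onto its base followed by another mapping cylinder, to the mapping cylinder of $f'\times Id$ composed with $\Gamma_0=\Gamma\circ(f_0\times Id)$ — that is, exactly $M(\Psi,C)$. One must check that this collapse is compatible with the adic/normal structure and with the bundle data $\eta_0=\Lambda^*(\xi_0\times\mathbb{R}_+)$ pulled back appropriately, but this is a formal manipulation with mapping telescopes once the combinatorics of the gluing is set up.

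I expect the main obstacle to be the bookkeeping of orientations and of the bundle data through these gluings and collapses: keeping track of which copy of $F_0$ is glued to which, ensuring the minus sign on $M(\Phi_0,B_0)$ is exactly cancelled by the corresponding piece coming from the normal bordism of Lemma~\ref{l1}, and verifying that the Thom-spectrum/normal-space structures (i.e. the maps to $\mathbb{\Omega}^{NSTOP}$) are respected at each stage rather than merely the underlying spaces. A secondary, more technical point is that $M((f',b')\times Id_{\mathbb{R}_+})$ and its pushforward must be handled as locally finite objects on the non-compact $F_0$ and on $M^n_0\times\mathbb{R}_+$; one should make sure the maps $\Gamma_0$ and the homotopies used are proper, so that $(\Gamma_0)_*$ is defined on $H^{lf}$ and the bordisms constructed are themselves locally finite. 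Once these compatibility checks are in place, the equality $(\Gamma_0)_*([M((f',b')\times Id_{\mathbb{R}_+})])=\{f,b\}-\{f_0,b_0\}$ follows, and in fact the proof makes transparent that the whole construction is just the image of the topological-manifold normal invariant of $(f',b')$ under $(f_0)_*$, which is the statement needed for the commutativity of diagram~\ref{d1}.
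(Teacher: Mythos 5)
Your overall strategy matches the paper's: invoke Lemma~\ref{l1} to replace $(f,b)$ by the composite $(f_0,b_0)\circ(f',b')$, then compare the resulting mapping-cylinder representatives over $F_0$. However, there are two intertwined gaps in the way you carry this out.

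First, your description of $(\Gamma_0)_*([M((f',b')\times Id_{\mathbb{R}_+})])$ is not correct. You declare it to be the normal space $M(\Psi,C)$, the mapping cylinder of the composite $\Psi=\Gamma_0\circ(f'\times Id_{\mathbb{R}_+}):{M'}^n\times\mathbb{R}_+\to F_0$. But $M(\Psi,C)$ has boundary ${M'}^n\times\mathbb{R}_+\amalg F_0$, and $F_0$ is not a manifold, so $M(\Psi,C)$ is not a cycle in $\Omega^{NSTOP}$ (which requires manifold boundary). The pushforward $(\Gamma_0)_*$ does not change the underlying normal space at all: the correct representative is the same mapping cylinder $M((f',b')\times Id_{\mathbb{R}_+})$, with its manifold boundary ${M'}^n\times\mathbb{R}_+\amalg M^n_0\times\mathbb{R}_+$, but now dissected into adic pieces over $F_0$ via $\Gamma_0$ composed with the mapping-cylinder projection.

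Second, the mechanism by which you remove $-M(\Phi_0,B_0)$ is vague and, as stated, incorrect. You appeal to ``the natural deformation retraction of a mapping cylinder onto its base,'' but a deformation retraction is not a bordism and cannot by itself remove the glued-on piece. The actual step the paper uses is: decompose the mapping cylinder of the composite $\Gamma_0\circ(f'\times Id_{\mathbb{R}_+})$ (via Ferry's mapping-cylinder identity) as
\[
M\bigl(f'\times Id_{\mathbb{R}_+}\bigr)\ \underset{M^n_0\times\mathbb{R}_+}{\cup}\ M(\Gamma_0),
\]
then glue on $-M(\Phi_0,B_0)=-M(\Gamma_0)$ along $F_0$, and finally use the crucial fact that $M(\Gamma_0)\underset{F_0}{\cup}-M(\Gamma_0)$ is $0$-bordant in $H^{lf}_{n+2}(F_0;\Omega^{NSTOP})$. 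What remains is $M\bigl(f'\times Id_{\mathbb{R}_+}\bigr)$ dissected over $F_0$ via $\Gamma_0$, which is precisely $(\Gamma_0)_*([M((f',b')\times Id_{\mathbb{R}_+})])$ — not $M(\Psi,C)$. You hint at this cancellation in your closing remarks (``the minus sign on $M(\Phi_0,B_0)$ is exactly cancelled\ldots''), but you relegate it to bookkeeping, whereas it is the central step; and you never actually split off the $M(\Gamma_0)$ factor that is to be cancelled. Supplying this decomposition and the $0$-bordism of $M(\Gamma_0)\cup_{F_0}-M(\Gamma_0)$, and correcting the identification of the pushforward, would close the gap.
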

 
 \begin{proof}
 The element 
  $ (\Gamma_{0})_{*}([M((f',b')\times Id_{\mathbb{R}_{+}})])$
 is represented by
 the mapping cylinder 
\[
 ({M'}^{n}\times \mathbb{R}_{+})\times I 
  \ 
\underset{f'\times Id_{\mathbb{R}_{+}}}{\cup} \
M^{n}_{0}\times \mathbb{R}_{+},
\]   
but decomposed according to the dissection given by
$\Gamma_{0}: M^{n}_{0}\times \mathbb{R}_{+} \to F_{0}.$
The element
$\{f,b\}-\{f_{0},b_{0}\}$
is represented by
\[
({M'}^{n}\times \mathbb{R}_{+})\times I 
 \underset{\Phi}{\cup}
 F_{0}  \ 
\underset{F_{0}}{\bigcup} \
-(M^{n}_{0}\times \mathbb{R}_{+})\times I 
 \underset{\Phi_{0}}{\cup}
 F_{0},
 \]   
as described in Section~\ref{s2}.
By Lemma~\ref{l1}, it is equivalent to the mapping
cylinder construction based on the composition of the normal maps
$(f_{0},b_{0})\circ(f',b').$ It gives
the following
\[
({M'}^{n}\times \mathbb{R}_{+})\times I 
 \underset{f'\times Id_{\mathbb{R}_{+}}}{\cup}
 M^{n}_{0}\times \mathbb{R}_{+}
 \cup
 (M^{n}_{0}\times \mathbb{R}_{+})\times I
 \underset{\Gamma_{0}}{\cup} \
 F_{0}  \
\cup 
-(M^{n}_{0}\times \mathbb{R}_{+})\times I 
 \underset{\Gamma_{0}}{\cup}
 F_{0},
 \]   
(cf. Ferry~\cite[Proposition 8.10]{Fe92} for the mapping cylinder calculations).

This is obviously bordant to 
\[({M'}^{n}\times \mathbb{R}_{+})\times I 
 \underset{f'\times Id_{\mathbb{R}_{+}}}{\cup}
 M^{n}_{0}\times \mathbb{R}_{+}
 \]
since 
\[
  (M^{n}_{0}\times \mathbb{R}_{+})\times I
 \underset{\Gamma_{0}}{\cup} \
 F_{0}  \
\cup 
-(M^{n}_{0}\times \mathbb{R}_{+})\times I 
 \underset{\Gamma_{0}}{\cup}
 F_{0}
\]
 is $0$-bordant. This completes the proof of Lemma~\ref{l2}. 
 \end{proof}
 Now 
 $(f',b')$
is a normal degree one map
between manifolds, so it defines an element
$[f',b']\in H_{n}(M^{n}_{0};\mathbb{L}^{+})$, namely its normal invariant. 

\begin{corollary}\label{c2}
Consider the homomorphism
$
(f_{0})_{*}:
H_{n}(M^{n}_{0};\mathbb{L}^{+})
\to
H^{st}_{n}(X^{n};\mathbb{L}^{+}).
$  
Then
$
(f_{0})_{*}([f',b'])=[f,b]-[f_{0},b_{0}].
$
\end{corollary}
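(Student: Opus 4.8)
The plan is to chase through the definitions, using Lemma~\ref{l2} as the essential input and then applying the spectrum-level map $sign^{\mathbb{L}}$ together with naturality. First I would recall that by definition the element $[f,b]-[f_{0},b_{0}]\in H^{st}_{n}(X^{n};\mathbb{L}^{+})$ is the image of $\{f,b\}-\{f_{0},b_{0}\}\in H^{lf}_{n+2}(F_{0};\mathbb{\Omega}^{NSTOP})$ under the composite
\[
H^{lf}_{n+2}(F_{0};\mathbb{\Omega}^{NSTOP})
\xrightarrow[]{\cong}
H^{st}_{n+2}(F,X^{n},\{c_{0}\};\mathbb{\Omega}^{NSTOP})
\xrightarrow[\cong]{\partial}
H^{st}_{n+1}(X^{n};\mathbb{\Omega}^{NSTOP})
\xrightarrow[]{sign^{\mathbb{L}}}
H^{st}_{n}(X^{n};\mathbb{L}^{+}).
\]
By Lemma~\ref{l2}, $\{f,b\}-\{f_{0},b_{0}\} = (\Gamma_{0})_{*}\bigl([M((f',b')\times Id_{\mathbb{R}_{+}})]\bigr)$, where $\Gamma_{0} = \Gamma\circ(f_{0}\times Id_{\mathbb{R}_{+}})$. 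So it suffices to identify the image of $[M((f',b')\times Id_{\mathbb{R}_{+}})]\in H^{lf}_{n+2}(M^{n}_{0}\times\mathbb{R}_{+};\mathbb{\Omega}^{NSTOP})$ under the analogous composite for the pair $(M^{n}_{0}\times\mathbb{R}_{+})$ — sitting inside the completed complex $M^{n}_{0}\times[0,\infty]$ with cone point added — and then transport it along $(\Gamma_{0})_{*}$.

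Next I would observe that the completion of $M^{n}_{0}\times\mathbb{R}_{+}$ at the two ends gives $M^{n}_{0}$ at the $\infty$-end and a cone at the $0$-end, so that the boundary map $\partial$ and the locally-finite-to-Steenrod identification produce a composite
\[
H^{lf}_{n+2}(M^{n}_{0}\times\mathbb{R}_{+};\mathbb{\Omega}^{NSTOP})
\xrightarrow[\cong]{}
H^{st}_{n+1}(M^{n}_{0};\mathbb{\Omega}^{NSTOP})
\xrightarrow[]{sign^{\mathbb{L}}}
H_{n}(M^{n}_{0};\mathbb{L}^{+}),
\]
and this composite, applied to the mapping-cylinder element of a genuine normal degree one map of manifolds crossed with $\mathbb{R}_{+}$, is precisely the classical normal invariant construction of Ranicki (decomposition into adic pieces, then $sign^{\mathbb{L}}$). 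Hence $[M((f',b')\times Id_{\mathbb{R}_{+}})]$ maps to $[f',b']\in H_{n}(M^{n}_{0};\mathbb{L}^{+})$. Now naturality of the whole package — the locally finite/Steenrod identification, the connecting homomorphism $\partial$, and the map of spectra $sign^{\mathbb{L}}$ — with respect to the map $\Gamma_{0}$, which on $\infty$-ends restricts to $f_{0}:M^{n}_{0}\to X^{n}$, yields a commutative square relating the two composites, so that $(f_{0})_{*}([f',b']) = [f,b]-[f_{0},b_{0}]$.

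The main obstacle I anticipate is the careful bookkeeping in the previous paragraph: one must check that the map $\Gamma_{0}:M^{n}_{0}\times\mathbb{R}_{+}\to F_{0}$ does extend to a map of completed fundamental complexes inducing $f_{0}$ on the added $X^{n}$ (this is where property Proposition~\ref{p}(d) and the telescope structure enter), and that the identification $H^{lf}_{*}(F_{0};\mathbb{E})\cong H^{st}_{*}(F,X^{n},\{c_{0}\};\mathbb{E})$ together with $\partial$ is natural for such maps. Equally, one must confirm that crossing with $\mathbb{R}_{+}$ and taking mapping cylinders is compatible with the dissection used to define the normal invariant, i.e. that the adic decomposition of $M((f',b')\times Id_{\mathbb{R}_{+}})$ refines compatibly under $\Gamma_{0}$; the mapping-cylinder calculus of Ferry~\cite[Proposition 8.10]{Fe92}, already invoked in the proof of Lemma~\ref{l2}, supplies exactly the bordism-level statement needed, so once the naturality of the homology machinery is in place the corollary follows.
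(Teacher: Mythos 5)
Your proposal is correct and follows essentially the same route as the paper: it invokes Lemma~\ref{l2}, identifies the image of $[M((f',b')\times Id_{\mathbb{R}_{+}})]$ under the locally-finite-to-Steenrod/boundary/$sign^{\mathbb{L}}$ composite with the classical normal invariant $[f',b']$, and concludes by naturality of that composite with respect to $\Gamma_{0}$ and its completion $\bar{\Gamma}_{0}$, which is exactly the content of the paper's commutative diagram~\ref{d9}.
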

\begin{remark}
If $X^{n}$ happens to be a topological $n$-manifold, then this is the Ranicki composition formula (cf. Ranicki~\cite[Proposition 2.7]{Ra09}).
\end{remark}
\begin{proof}
The assertion follows from the following diagram
{\small
 \begin{equation}\label{d9}
\begin{tikzpicture}[auto, node distance=7cm]
  \node (A) {$H^{lf}_{n+2}(M^{n}_{0}\times \mathbb{R}_{+};\mathbb{\Omega}^{NSTOP})$};
  \node (B) [right of=A] {$H^{lf}_{n+2}(F_{0};\mathbb{\Omega}^{NSTOP})$};
  \node (A1) [below of=A,yshift=4.5cm] {$H^{st}_{n+2}(M^{n}_{0}\times [0,\infty], M^{n}_{0}\times \{\infty \}, M^{n}_{0}\times \{0\};\mathbb{\Omega}^{NSTOP})$};
  \node (B1) [below of=B, yshift=4.5cm] {$H^{st}_{n+2}(F, X^{n}, \{c_{0}\};\mathbb{\Omega}^{NSTOP})$};
  \node (A2) [below of=A1, yshift=4.5cm] {$H_{n+1}(M^{n}_{0};\mathbb{\Omega}^{NSTOP})$};
  \node (B2) [below of=B1, yshift=4.5cm] {$H^{st}_{n+1}(X^{n};\mathbb{\Omega}^{NSTOP})$};
  \node (A3) [below of=A2, yshift=4.5cm] {$H_{n}(M^{n}_{0};\mathbb{L}^{+})$};
  \node (B3) [below of=B2, yshift=4.5cm] {$H^{st}_{n}(X^{n};\mathbb{L}^{+})$};
  \draw[->, font=\small] (A)  to node  [midway, above] {$(\Gamma_{0})_{*}$} (B);
  \draw[shorten >=0.5cm,shorten <=0.5cm,->, font=\small] (A)  to node {$\cong$} (A1);
  \draw[shorten >=0.5cm,shorten <=0.5cm,->, font=\small] (B)  to node  {$\cong$} (B1);
  \draw[->, font=\small] (A1) to node [midway, above]{$(\bar{\Gamma}_{0})_{*}$} (B1);  
  \draw[shorten >=0.5cm,shorten <=0.5cm,->, font=\small] (A1) to node {} (A2);
  \draw[shorten >=0.5cm,shorten <=0.5cm,->, font=\small] (B1) to node {} (B2);
  \draw[->, font=\small] (A2) to node [midway, above]{$(f_{0})_{*}$} (B2);
  \draw[shorten >=0.5cm,shorten <=0.5cm,->, font=\small] (A2) to node {$(sign^{{\mathbb{L}}^{+}})_{*}$} (A3);
  \draw[shorten >=0.5cm,shorten <=0.5cm,->, font=\small] (B2) to node  {$(sign^{{\mathbb{L}}^{+}})_{*}$} (B3);
  \draw[->, font=\small] (A3) to node [midway, above]{$(f_{0})_{*}$} (B3);
\end{tikzpicture}
\end{equation}
}
Note that the element
$
[M((f',b')\times Id_{\mathbb{R}_{+}})]
\in
H^{lf}_{n+2}(M^{n}_{0}\times \mathbb{R}_{+}; \mathbb{\Omega}^{NSTOP})
$
maps to
$[f',b']$
under the left vertical arrow of morphisms.
The completion
of
$\Gamma_{0}$
then gives the map
$\bar{\Gamma}_{0}: M^{n}_{0} \times [0,\infty] \to F.$
This completes the proof of Corollary~\ref{c2}.
\end{proof}

 \subsection*{III}
 {\bf Summary:}
Let $X^{n}$ be an oriented closed generalized manifold of dimension $n\ge 5,$
and
$f_{0}:M^{n}_{0} \to X^{n}, \   b_{0}:\nu_{M^{n}_{0}}\to \xi_{0}$
a surgery problem according to a $BSTOP$-reduction of $\nu_{X^{n}}.$
Then the map 
$t:\mathcal{N}(X^{n}) \to H^{st}_{n} ( X^{n}; \mathbb{L}^+),$
defined in Section~\ref{s2}, fits into the
following commutative diagram 
  \begin{equation}\label{d10}
\begin{tikzpicture}[baseline=-0.8cm,  auto, node distance=2.5cm]
  \node (1) {$\mathcal{N}(M^{n}_{0})$};
  \node (2) [right of=1] {$ H_{n} ( M^{n}_{0}; \mathbb{L}^+)$}; 
  \node (11) [below of=1, yshift=1.0cm] {$\mathcal{N}(X^{n})$};
  \node (22) [below of=2, yshift=1.0cm] {$ H^{st}_{n} ( X^{n}; \mathbb{L}^+)$}; 
\draw[->, font=\small] (1) to node {$t_{0}$} (2); 
\draw[->, font=\small] (11) to node {} (1);
\draw[->, font=\small] (2) to node {$(f_{0})_{*}$} (22); 
\draw[->, font=\small] (11) to node {$t$} (22);   
\end{tikzpicture}
\end{equation}

 Here, $t_{0}$ sends a normal degree one map with target $M^{n}_{0}$ to its normal invariant.
 Moreover, under the identification of Corollary~\ref{c1}, diagram~\ref{d10} can be redrawn as follows  
 \begin{equation}\label{d11}
\begin{tikzpicture}[baseline=-1.0cm,  auto, node distance=3cm]
  \node (1) {$[M^{n}_{0},G/TOP]$};
  \node (2) [right of=1] {$H_{n} ( M^{n}_{0}; \mathbb{L}^+)$};
  \node (11) [below of=1, yshift=1.5cm] {$[X^{n},G/TOP]$};
  \node (22) [below of=2, yshift=1.5cm] {$ H^{st}_{n} ( X^{n}; \mathbb{L}^+)$};
\draw[->, font=\small] (1) to node {} (2);
\draw[->, font=\small] (11) to node {$(f_{0})^{*}$} (1);
\draw[->, font=\small] (2) to node {$(f_{0})_{*}$} (22);
\draw[->, font=\small] (11) to node {} (22);  
\end{tikzpicture}
\end{equation}
\section{Proof of Theorem~\ref{B}}\label{s4}  
 
The essence of the proof was already given in Section~\ref{s1},
by comparing diagrams \ref{d1} and \ref{d2}.
In this section  we present the details.
 
Let $\mathbb{L}^{\bullet}$ denote the symmetric 
  $\mathbb{L}$-spectrum (cf. Ranicki~\cite[Chapter 13]{Ra92}). It is a ring spectrum and 
   $\mathbb{L}^{+}$ 
   is a
   $\mathbb{L}^{\bullet}$-module spectrum.
   Hence the cup product constructions
   $
   H^{q}(Z,A; \mathbb{L}^{\bullet})
   \times
   H^{p}(Z; \mathbb{L}^{+})
   \to
   H^{p+q}(Z,A; \mathbb{L}^{+})
   $
   are well-defined. 
   
 Considering
 an oriented closed generalized $n$-manifold, embedded in $ X^{n} \subset S^{m},$
 for some $m \ge n,$
  its Spivak fibration $\nu_{X^{n}}$
 has a canonical orientation
 (cf. Ranicki~\cite[Chapter 16]{Ra92}),
   i.e. a Thom class
    \[
    \mathcal{U}_{\nu_{X^{n}}}
    \in
     H^{m-n}(E(\nu_{X^{n}}), \partial E(\nu_{X^{n}}); \mathbb{L}^{\bullet})
     \cong
     \tilde{H}^{m-n}(T(\nu_{X^{n}}); \mathbb{L}^{\bullet}),
    \] 
     inducing the Thom isomorphism (here,
     $E(\nu_{X^{n}})$
      is the associated disk fibration)
     \[
      H^{0}(X^{n}; \mathbb{L}^{+})=
      H^{0}(E(\nu_{X^{n}}); \mathbb{L}^{+})
      \xrightarrow[]{\cup \  \mathcal{U}_{\nu_{X^{n}}}} 
       \tilde{H}^{m-n}(T(\nu_{X^{n}}); \mathbb{L}^{+}).
     \] 
     
  Recall that {\it canonical} means that it is constructed via the canonical reduction $\xi_{0}$
   of $\nu_{X^{n}}.$
   Hence the Thom class
      $
    \mathcal{U}_{\xi_{0}}
    \in
     \tilde{H}^{m-n}(T(\xi_{0}); \mathbb{L}^{\bullet}),
     $
      corresponds to 
      $ \mathcal{U}_{\nu_{X^{n}}} $
      under the homotopy equivalence between
      $ T(\xi_{0})$
      and
      $T(\nu_{X^{n}}). $
      
  The existence of 
      $  \mathcal{U}_{\xi_{0}} $
  is guaranteed   (cf. Ranicki~\cite[Chapter 16]{Ra92}).
  Moreover, since 
      $(f_{0})^{*}(\xi_{0})
      \cong
       \nu_{M^{n}_{0}},$
    it follows that 
     $
   f_{0}:M^{n}_{0} \to X^{n}, \  b_{0}:\nu_{M^{n}_{0}}\to \xi_{0}
     $
   induces
      $T(b_{0}):T(\nu_{M^{n}_{0}}) \to T(\xi_{0}),$
      so that under 
     \[
      (T(b_{0}))^{*}:H^{m-n}(T(\xi_{0});  \mathbb{L}^{\bullet}) 
      \to
      H^{m-n}(T(\nu_{M^{n}_{0}});   \mathbb{L}^{\bullet}),
     \] 
      $\mathcal{U}_{\xi_{0}}$
      is mapped to
      $\mathcal{U}_{\nu_{M^{n}_{0}}}$, 
      the Thom class of
      $\nu_{M^{n}_{0}}.$
 This implies commutativity of the following diagram 
 \begin{equation}\label{d12}
\begin{tikzpicture}[baseline=-1.0cm,  auto, node distance=4.5cm]
  \node (1) {$H^{0}(M^{n}_{0}; \mathbb{L}^{+})$};
  \node (2) [right of=1] {$\tilde{H}^{m-n}(T(\nu_{M^{n}_{0}});   \mathbb{L}^{+})$}; 
  \node (11) [below of=1, yshift=2.5cm] {$H^{0}(X^{n}; \mathbb{L}^{+})$};
  \node (22) [below of=2, yshift=2.5cm] {$\tilde{H}^{m-n}(T(\xi_{0});   \mathbb{L}^{+})$}; 
\draw[->, font=\small] (1) to node {$. \cup   \mathcal{U}_{\nu_{M^{n}_{0}}}$} (2); 
\draw[->, font=\small] (11) to node {$(f_{0})^{*}$} (1);
\draw[->, font=\small] (22) to node {$(T(b_{0}))^{*}$} (2); 
\draw[->, font=\small] (11) to node {$. \cup  \mathcal{U}_{\nu_{X^{n}}}$} (22);   
\end{tikzpicture}
\end{equation} 

 The Thom isomorphisms are now composed with the $S$-duality isomorphisms:
 \[
 \tilde{H}^{m-n}(T(\nu_{M^{n}_{0}});   \mathbb{L}^{+})
 \cong
 H^{st}_{n}(M^{n}_{0};  \mathbb{L}^{+})
 \cong
  H_{n}(M^{n}_{0};  \mathbb{L}^{+})
 \]   
 and
\[
 \tilde{H}^{m-n}(T(\nu_{X^{n}});   \mathbb{L}^{+})
 \cong
 H^{st}_{n}(X^{n};  \mathbb{L}^{+}).
 \] 
 
 For the generalized manifold $X^{n},$
 this follows from  Kahn, Kaminker and Schochet~\cite[Theorem B]{KKS77}, which asserts that
 \[
 H^{m-n-1}(S^{m}\setminus X^{n};\mathbb{L}^{+} )
 \cong
 H^{st}_{n}(X^{n}; \mathbb{L}^{+}).
 \]  
Since for every $m \ge n,$
 $$H^{m-n-1}(S^{m};\mathbb{L}^{+})=L_{m-1}, \ \ 
 H^{m-n}(S^{m};\mathbb{L}^{+})=L_{m},$$
 where $L_{q}=\pi_{q}(G/TOP),$
 the exact sequence of the pair $(S^{m},S^{m}\setminus X^{n})$ then implies that 
 \[  
 H^{m-n-1}(S^{m}\setminus X^{n};\mathbb{L}^{+})
 \cong 
 \tilde{H}^{m-n}(S^{m},S^{m}\setminus X^{n};\mathbb{L}^{+})
 \]
 \[
 \cong 
  \tilde{H}^{m-n}(T(\nu_{X^{n}});\mathbb{L}^{+} )  \cong   \tilde{H}^{m-n}(T(\xi_{0});\mathbb{L}^{+} ).  \]    
 This also applies to $M^{n}_{0}.$
 
 The proof of Kahn, Kaminker and Schochet~\cite[Theorem B]{KKS77}
 shows that  the following diagram commutes 
 \begin{equation}\label{d13}
\begin{tikzpicture}[baseline=-1.0cm,  auto, node distance=4.5cm]
  \node (1) {$\tilde{H}^{m-n}(T(\nu_{M^{n}_{0}});\mathbb{L}^{+} )$};
  \node (2) [right of=1] {$H^{st}_{n}(M^{n}_{0};  \mathbb{L}^{+})=H_{n}(M^{n}_{0};  \mathbb{L}^{+})$};
  \node (11) [below of=1, yshift=2.5cm] {$ \tilde{H}^{m-n}(T(\xi_{0});\mathbb{L}^{+} )$};
  \node (22) [below of=2, yshift=2.5cm] {$H^{st}_{n}(X^{n};  \mathbb{L}^{+})$};
\draw[->, font=\small] (1) to node {$\cong$} (2);
\draw[->, font=\small] (11) to node {$(T(b_{0}))^{*}$} (1);
\draw[->, font=\small] (2) to node {$(f_{0})_{*}$} (22);
\draw[->, font=\small] (11) to node {$\cong$} (22);  
\end{tikzpicture}
\end{equation} 

Briefly, this follows since the Spanier-Whitehead duality isomorphism comes from the slant product constructions, using the map
\[
X^{n}_{+} \wedge T(\xi_{0})
\cong 
X^{n}_{+} \wedge T(\nu_{X^{n}})
\to 
S^{m},
\]   
i.e. it comes from the element in 
$
H^{m}(X^{n}_{+} \wedge T(\xi_{0});\mathbb{L}^{+})
$
which it defines.
This construction is natural for the normal map $(f_{0},b_{0}).$
Since $X^{n}$ is not a complex, $T(\nu_{X^{n}})$
is replaced by a certain function space which leads to the Steenrod homology 
(cf.  Kahn, Kaminker and Schochet~\cite[Section 4]{KKS77}).
\smallskip

 {\bf Summary:} The following diagram commutes
\begin{equation}\label{d14}
\begin{tikzpicture}[baseline=-1cm, auto, node distance=3.5cm]
  \node  (1) {$H^{0}(M^{n}_{0};\mathbb{L}^{+})$};
  \node   (3) [right of=1] {$\tilde{H}^{m-n}(T(\nu_{M^{n}_{0}});\mathbb{L}^{+})$};
  \node  (4) [right of=3] {$H_{n}(M^{n}_{0};\mathbb{L}^{+})$};   
  \node   (11) [below of=1, yshift=1.5cm] {$H^{0}(X^{n};\mathbb{L}^{+})$};
  \node   (33) [below of=3, yshift=1.5cm] {$\tilde{H}^{m-n}(T(\xi_{0});\mathbb{L}^{+})$};   
  \node  (44) [below of=4, yshift=1.5cm] {$H^{st}_{n}(X^{n};\mathbb{L}^{+})$};   
\draw[->, font=\small] (1) to node {$\cong$} (3); 
\draw[->, font=\small] (3) to node {$SD$} (4);
\draw[->, font=\small] (11) to node {$(f_{0})^{*}$} (1); 
\draw[->, font=\small] (33) to node {${(T(b_{0}))^{*}}$} (3);
\draw[->, font=\small] (4) to node [swap] {$(f_{0})_{*}$} (44);
\draw[->, font=\small] (11) to node {$\cong$} (33);   
\draw[->, font=\small] (33) to node {$SD$} (44);    
\end{tikzpicture}
\end{equation} 
The composition of the upper row isomorphisms is known to be
\[
. \cap [M^{n}_{0}]_{\mathbb{L}^{\bullet}}:
H^{0}(M^{n}_{0};\mathbb{L}^{+})
\xrightarrow[]{\cong}
H_{n}(M^{n}_{0};\mathbb{L}^{+}),
\]
where
$[M^{n}_{0}]_{\mathbb{L}^{\bullet}}
\in
H_{n}(M^{n}_{0};\mathbb{L}^{\bullet})$
is the 
$\mathbb{L}^{\bullet}$-coefficient fundamental class of $M^{n}_{0}$
 (cf. Ranicki~\cite[Proposition 18.3]{Ra92}).
Finally, we can identify 
\[
[M^{n}_{0},G/TOP]=
H^{0}(M^{n}_{0};\mathbb{L}^{+}),
\ \
[X^{n},G/TOP]=
H^{0}(X^{n};\mathbb{L}^{+} ),
\]
according to the equivalence
$
G/TOP \xrightarrow[]{\cong} \mathbb{L}^{+}
$
 (cf. Kirby and Siebenmann~\cite[Essay 5, Appendix C]{KiSi77}, Ranicki~\cite[Proposition 16.1]{Ra92}).
 
Combining this with Corollary~\ref{c1} and diagram~\ref{d10}, we obtain  the following diagram
\begin{equation}\label{d15}
\begin{tikzpicture}[baseline=-1.0cm, ->,>=stealth', auto, node distance=3cm, main node/.style={font=\small}]
  \node [main node] (1) {$H^{0}(M^{n}_{0};\mathbb{L}^{+})$};
  \node [main node] (2) [right of=1] {$[M^{n}_{0},G/TOP]$};
  \node [main node] (3) [right of=2] {$\mathcal{N}(M^{n}_{0})$};
  \node [main node] (4) [right of=3] {$H_{n}(M^{n}_{0};\mathbb{L}^{+})$};    
 \path[every node/.style={font=\sffamily\small}]
    (1) edge node [right] {} (2)
    (2) edge node [right] {} (3)
    (3) edge node [right] {} (4)
    (1) edge[bend left] node [left,above] {$. \cap [M^{n}_{0}]_{\mathbb{L}^{+}}$} (4);          
  \node [main node] (11) [below of=1, yshift=1.0cm] {$H^{0}(X^{n};\mathbb{L}^{+})$};
  \node [main node] (22) [below of=2, yshift=1.0cm] {$[X^{n},G/TOP]$};
  \node [main node] (33) [below of=3, yshift=1.0cm] {$\mathcal{N}(X^{n})$};
  \node [main node] (44) [below of=4, yshift=1.0cm] {$H^{st}_{n}(X^{n};\mathbb{L}^{+})$};  
\path[every node/.style={font=\small}]  
    (11) edge node [right] {} (22)
    (22) edge node [right] {} (33)
    (33) edge node [right] {} (44)
    (11) edge[bend right] node [left, above] {$\cong$} (44);       
\draw[->, font=\small] (1) to node {$\cong$} (2);
\draw[->, font=\small] (2) to node {$\cong$} (3);
\draw[->, font=\small] (3) to node {$t_{0}$} (4);
\draw[->, font=\small] (11) to node {$(f_{0})^{*}$} (1);
\draw[->, font=\small] (22) to node {$(f_{0})^{*}$} (2);
\draw[->, font=\small] (33) to node {} (3);
\draw[->, font=\small] (4) to node [swap] {$(f_{0})_{*}$} (44);
\draw[->, font=\small] (11) to node {$\cong$} (22);  
\draw[->, font=\small] (22) to node {$\cong$} (33);
\draw[->, font=\small] (33) to node {$t$} (44);    
\end{tikzpicture}
\end{equation}

Commutativity of the outer diagram (cf. diagram~\ref{d14}) and  each square imply that
\[
\mathcal{N}(X^{n}) 
\to
 \mathcal{N}(M^{n}_{0})
\xrightarrow[]{t_{0}}
H_{n}(M^{n}_{0};\mathbb{L}^{+})
\xrightarrow[]{(f_{0})_{*}}
H^{st}_{n}(X^{n};\mathbb{L}^{+})
\]
is an isomorphism, hence by diagram~\ref{d10}, this composition is $t.$
This completes the proof of Theorem~\ref{B}.\qed
\begin{remark}
 In particular, the proof of
 Theorem~\ref{B}
  also  shows
   that the $\mathbb{L}$-duality isomorphism for generalized
manifold $X^{n}$ factors over $t:\mathcal{N}(X^{n}) \to H^{st}_{n} ( X^{n}; \mathbb{L}^+)$.
\end{remark}
\section{Epilogue}\label{s5}

We shall conclude this paper by a brief outlook for further studies, following a very interesting suggestion of the referee. In this paper we have proved that there exists a bijective map
 $t:\mathcal{N}(X^{n}) \to H^{st}_{n} ( X^{n}; \mathbb{L}^{+})$
 from normal degree one bordisms
 to the Steenrod homology of the spectrum
 $\mathbb{L}^{+}$.
 
 The Steenrod homology is known to behave well on
 the category of compact metric spaces.
 In particular, if
 $q:X^{n}\to B$
 is any morphism,
 then there exists a long exact sequence
 \begin{equation}\label{eq1}
\dots
\to 
H^{st}_{n+1}(B;\mathbb{L}^{+})
\to
H^{st}_{n+1}(B,X^{n};\mathbb{L}^{+})
\xrightarrow[]{\partial_{*}}
H^{st}_{n}(X^{n};\mathbb{L}^{+})
\xrightarrow[]{q_{*}}
H^{st}_{n}(B;\mathbb{L}^{+})
\to
\dots
\end{equation}
On the other hand, if
 $q:X^{n}\to B$
 is a $UV^{1}$-map,
 then
 there is a controlled surgery sequence
 (cf. 
 Bryant, Ferry, Mio and Weinberger~\cite{BFMW96},
 Mio~\cite{Mi00},
 and Nicas~\cite{Ni82}),
\begin{equation}\label{eq2}
H^{st}_{n+1}(B;
\mathbb{L})
\to 
{\mathcal{S}^{c}}
\Biggl(
\CD
X^{n}\\
@VVV{q}\\
B\\
\endCD
\Biggr)
\xrightarrow[]{}
\mathcal{N}(X^{n})
\xrightarrow[]{\sigma^{c}}
H^{st}_{n}(B;\mathbb{L})
\end{equation}

Here, $\mathbb L$ denotes the $4$-periodic spectrum with  
$\mathbb{L}_{0}=\mathbb{Z} \times G/TOP$,
$\sigma^{c}$ is the controlled surgery obstruction map, and 
\begin{equation}\label{set}
{\mathcal{S}^{c}}
\Biggl(
\CD
X^{n}\\
@VVV{q}\\
B\\
\endCD
\Biggr)
\end{equation}
is the controlled structure set.
This  controlled surgery sequence~\ref{eq2}
makes sense if the controlled structure set \ref{set} is nonempty
(cf. Mio~\cite[Theorem 3.8]{Mi00}). 

It is natural to ask if sequences
\ref{eq1} and \ref{eq2} are related via the map
 $t:\mathcal{N}(X^{n}) \to H^{st}_{n} ( X^{n}; \mathbb{L}^{+})$. 
 First, one notes that two spectra 
 $\mathbb{L}^{+}$
 and
 $\mathbb{L}$
 are involved, where
 $\mathbb{L}^{+}
 \xrightarrow[]{i}
\mathbb{L}$
is considered as the covering spectrum over the Eilenberg-MacLane spectrum
$K(\mathbb{Z},0)$, i.e.
 $\mathbb{L}^{+}
 \xrightarrow[]{i}
\mathbb{L}\to 
K(\mathbb{Z},0)
$
is a fibration of spectra.

In order to compare sequences
\ref{eq1} and \ref{eq2}  we consider the composite map
$$
q_{*}\circ i_{*}:
H^{st}_{n}(X^{n};\mathbb{L}^{+})
\xrightarrow[]{i_{*}}
H^{st}_{n}(X^{n};\mathbb{L})
\xrightarrow[]{q_{*}}
H^{st}_{n}(B;\mathbb{L}),
$$
and
obtain the following diagram
 \begin{equation}\label{d16}
\begin{tikzpicture}[baseline=-0.8cm,  auto, node distance=3cm]
  \node (1) {${H}^{st}_{n}(X^{n};\mathbb{L}^{+})$};
  \node (2) [right of=1] {${H}^{st}_{n}(B;\mathbb{L})$};
  \node (11) [below of=1, yshift=1.5cm] {$\mathcal{N}(X^{n})$};
  \node (22) [below of=2, yshift=1.5cm] {${H}^{st}_{n}(B;\mathbb{L})$};
\draw[->, font=\small] (1) to node {$q_{*}\circ i_{*}$} (2);
\draw[->, font=\small] (11) to node {$t$} (1);
\draw[->, font=\small] (22) to node {=} (2);
\draw[->, font=\small] (11) to node {$\sigma^{c}$} (22);  
\end{tikzpicture}
\end{equation} 
 The first step
 would be to prove commutativity of diagram \ref{d16}.
 However, this is not enough, since one also needs a map 
 between 
 ${H}^{st}_{n+1}(B, X^{n};\mathbb{L}^{+})$
 and the set \ref{set},
 compatible with 
 $t:\mathcal{N}(X^{n}) \to H^{st}_{n} ( X^{n}; \mathbb{L}^{+})$. 
 This can all be done if $X^{n}$
 is a topological $n$-manifold
 (cf. Hegenbarth and Repov\v{s}~\cite{HeRe06}).
  In the case when $X^{n}$ is a generalized $n$-manifold, this is still an unsolved problem.
  
For the second step, one is led to ''refining''
  the map
 $t:\mathcal{N}(X^{n}) \to H^{st}_{n} ( X^{n}; \mathbb{L}^{+})$
 to a map 
 $$
 \overline{t}:
 {\mathcal{S}^{c}}
\Biggl(
\CD
X^{n}\\
@VVV{q}\\
B\\
\endCD
\Biggr)
\to H^{st}_{n+1} (B, X^{n}; \mathbb{L}^{+})$$
so that the following diagram is commutative
\begin{equation}\label{d17}
\begin{tikzpicture}[baseline=-1cm, ->,>=stealth', auto, node distance=3cm, main node/.style={font=\small}]
  \node [main node] (1) {$H^{st}_{n+1}(B;\mathbb{L}^{+})$};
  \node [main node] (2) [right of=1] {$H^{st}_{n+1}(B, X^{n};\mathbb{L}^{+})$};
  \node [main node] (3) [right of=2] {$H^{st}_{n}(X^{n};\mathbb{L}^{+})$};
  \node [main node] (4) [right of=3] {$H^{st}_{n}(B;\mathbb{L})$};    
  \node [main node] (11) [below of=1, yshift=1.5cm] {$H^{st}_{n+1}(B;\mathbb{L})$};
  \node [main node] (22) [below of=2, yshift=1.5cm] {${\mathcal{S}^{c}}$};
  \node [main node] (33) [below of=3, yshift=1.5cm] {$\mathcal{N}(X^{n})$};
  \node [main node] (44) [below of=4, yshift=1.5cm] {$H^{st}_{n}(B;\mathbb{L})$};  
\draw[->, font=\small] (1) to node {} (2);
\draw[->, font=\small] (2) to node {} (3);
\draw[->, font=\small] (3) to node {$q_{*}\circ i_{*}$} (4);
\draw[->, font=\small] (1) to node {$i_{*}$} (11);
\draw[->, font=\small] (22) to node {$\overline{t}$} (2);
\draw[->, font=\small] (33) to node {$t$} (3);
\draw[->, font=\small] (4) to node [swap] {=} (44);
\draw[->, font=\small] (11) to node {} (22);  
\draw[->, font=\small] (22) to node {} (33);
\draw[->, font=\small] (33) to node {$\sigma^{c}$} (44);    
\end{tikzpicture}
\end{equation}
where 
${\mathcal{S}^{c}}$ 
denotes the set \ref{set}.

Since $\dim X^{n}=n,$ we may assume that $\dim B\leq n.$
In this case, it follows from the Atiyah-Hirzebruch spectral sequence
(which holds for the Steenrod homology, cf. Hegenbarth and Repov\v{s}~\cite[p. 206]{HeRe20})
that 
$H^{st}_{n+1}(B;\mathbb{L}^{+})
\xrightarrow[]{i_{*}}
H^{st}_{n+1}(B;\mathbb{L})$
is an isomorphism.
In this case, the map
$$
 \overline{t}:
 {\mathcal{S}^{c}}
\Biggl(
\CD
X^{n}\\
@VVV{q}\\
B\\
\endCD
\Biggr)
\to H^{st}_{n+1} (B, X^{n}; \mathbb{L}^{+})$$
is bijective.
However, the existence of such a map
$\overline{t}$
is at present still a conjecture.

\subsection*{Acknowledgements}
We gratefully acknowledge John Bryant for his remark regarding Proposition~\ref{p}.
 We also thank the referee for several comments and suggestions.
This research was supported by the Slovenian Research Agency grants P1-0292, N1-0083, and N1-0114.

\end{document}